\newcommand{\im}{{\rm im}}
\newtheorem{lemma1}{}[section]
\newenvironment{lemma}{\begin{lemma1}{\bf Lemma.}}{\end{lemma1}}
\newenvironment{example}{\begin{lemma1}{\bf Example.}\rm}{\end{lemma1}}
\newenvironment{theorem}{\begin{lemma1}{\bf Theorem.}}{\end{lemma1}}
\newenvironment{proposition}{\begin{lemma1}{\bf Proposition.}}{\end{lemma1}}
\newenvironment{corollary}{\begin{lemma1}{\bf Corollary.}}{\end{lemma1}}
\newenvironment{definition}{\begin{lemma1}{\bf Definition.}}{\end{lemma1}}
\newenvironment{conjecture}{\begin {lemma1}{\bf Conjecture.}}{\end{lemma1}}
\newenvironment{remark*}{{\bf Remark.}}{}
\newenvironment{example*}{{\bf Example.}}{}
\newcommand{\Z}{\ensuremath{\mathbb{Z}}}
\newcommand{\C}{\ensuremath{\mathbb{C}}}
\newcommand{\merom}[3]{\ensuremath{#1:#2 \dashrightarrow #3}}
\newcommand{\holom}[3]{\ensuremath{#1:#2  \rightarrow #3}}
\newcommand{\Chow}[1]{\ensuremath{{\rm Chow}(#1)}}
\newcommand{\upX}{\ensuremath{\tilde{X}}}
\newcommand{\upY}{\ensuremath{\tilde{Y}}}
\newcommand{\barX}{\ensuremath{\overline{X}}}
\newcommand{\barY}{\ensuremath{\overline{Y}}}
\newtheorem{say}[lemma1]{}
\renewcommand{\c}[0]{{\mathbb C}}  
\renewcommand{\o}[0]{{\mathcal O}} 
\newcommand{\z}[0]{{\mathbb Z}}
\newcommand{\p}[0]{{\mathbb P}}
\newcommand{\q}[0]{{\mathbb Q}}
\newcommand{\map}[0]{\dasharrow}
\newcommand{\qtq}[1]{\quad\mbox{#1}\quad}
\newcommand{\aut}[0]{\operatorname{Aut}}
\newcommand{\sing}[0]{\operatorname{Sing}}    
\newcommand{\ex}[0]{\operatorname{Ex}}
\newcommand{\stab}[0]{\operatorname{stab}}
\newcommand{\mor}[0]{\operatorname{FinMor}} 
\newcommand{\univ}[0]{\operatorname{Univ}}
\newcommand{\loc}[0]{\operatorname{Locus}} 
\newcommand{\alb}[0]{\operatorname{alb}} 
\newcommand{\Alb}[0]{\operatorname{Alb}}
\def\into{\DOTSB\lhook\joinrel\to}
\title{Algebraic varieties with quasi-projective universal cover}
\date{February 14, 2011}
\subjclass[2000]{32Q30, 14E30, 14J30}
\keywords{universal cover, MMP}
\author{Beno\^it Claudon}
\author{Andreas H\"oring}
\author{J\'anos Koll\'ar}
\address{Beno\^it Claudon, Institut \'Elie Cartan Nancy, Universit\'e Henri Poincar\'e Nancy 1, B.P. 70239, 54506 Vandoeuvre-l\`es-Nancy Cedex, France}
\email{Benoit.Claudon@iecn.u-nancy.fr}
\address{Andreas H\"oring, Universit\'e Pierre et Marie Curie and Albert-Ludwig Universit\"at Freiburg} 
\curraddr{Mathematisches Institut, Albert-Ludwigs-Universit\"at
  Freiburg, Eckerstra{\ss}e 1, 79104 Freiburg im Breisgau, Germany}
\email{hoering@math.jussieu.fr}
\address{J\'anos Koll\'ar,  Department of Mathematics, Fine Hall, Washington Road,
Princeton University,  Princeton, NJ 08544-1000,  USA}
\email{kollar@math.princeton.edu}
\begin{document}

\begin{abstract} 
We prove that the universal cover of a normal, projective variety $X$ is
quasi-projective iff a finite, \'etale cover of $X$ is a 
fiber bundle over an Abelian variety with simply connected fiber. 
\end{abstract}

\maketitle


\vspace{-1ex}

\section{Introduction}

In his book \cite[Sec.IX.4]{Sha74}, Shafarevich emphasizes the
need to understand universal covers of smooth, projective varieties.
Although his  conjectures may not hold in general \cite{BK98},
they are true for groups with faithful linear representations
\cite{EKPR}. Applications of the general ideas behind the Shafarevich
conjectures are discussed in \cite{kol-book}.
These methods are especially powerful if the universal cover
is easy to describe, as it happens for Abelian varieties,
whose universal cover is $\c^n$.
This suggests that one should study  projective varieties
whose  universal cover is quasi-projective.

There are two significant results in this direction.

$\bullet$ Nakayama shows \cite[Thm.1.4]{Nak99} that 
the  universal cover of a smooth, 
projective variety $X$
is  quasi affine if and only if $X$ has a finite \'etale cover that is 
 an  Abelian variety.

$\bullet$ It is a consequence of the Beauville--Bogomolov decomposition 
theorem \cite{Beau83} that if $X$ is a Calabi--Yau variety, then
its universal cover $\tilde X$ is biholomorphic to $Y\times \c^m$ where
$Y$ is a compact, simply connected, Calabi--Yau variety.
In particular, $\pi_1(X)$ is almost abelian and $\tilde X$ is
  biholomorphic to a  quasi-projective variety.

In this paper we prove the following theorem which can be viewed as a common
generalization of these results.
While we give a complete answer, the proof assumes the validity
of  the abundance conjecture (\ref{abund.conj}).
This assumption is already present   in  \cite{Nak99} explicitly
and  in \cite{Beau83} implicitly.

\begin{theorem}  \label{theoremquasiprojective}
Assume that the abundance conjecture (\ref{abund.conj}) holds.
Then, for any  normal, projective variety $X$ the following are equivalent.
\begin{enumerate}
\item The universal cover of $X$ is  biholomorphic to a  
quasi-projective variety.
\item There is a finite, \'etale, Galois  cover $X'\to X$ that is a 
fiber bundle over an Abelian variety with
simply connected fiber. 
\item The universal cover $\upX$ is biholomorphic to a 
product $\C^m \times F$ where $m\geq 0$
and $F$ is a projective, simply connected variety.
\end{enumerate}
\end{theorem}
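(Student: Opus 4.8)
The plan is to close the cycle $(3)\Rightarrow(1)\Rightarrow(2)\Rightarrow(3)$, with all the substance concentrated in $(1)\Rightarrow(2)$. The implication $(3)\Rightarrow(1)$ is immediate: $\C^m$ is affine and $F$ is projective, so $\C^m\times F$ is quasi-projective. For $(2)\Rightarrow(3)$ I would argue by covering space theory. A finite \'etale cover leaves the universal cover unchanged, so $\upX=\tilde{X'}$. If $p\colon X'\to A$ is the fiber bundle, with $A$ abelian of dimension $m$ and simply connected fiber $F$, then the homotopy sequence of $p$ together with the simple connectivity of $F$ gives $\pi_1(X')\cong\pi_1(A)$; hence $\tilde{X'}$ is the pullback of $p$ along the universal covering $\C^m\to A$. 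Since this bundle is obtained by suspension from the monodromy representation $\pi_1(A)\to\aut(F)$, its pullback to the simply connected base $\C^m$ is biholomorphic to $\C^m\times F$, which is what $(3)$ asserts.

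For $(1)\Rightarrow(2)$ I would run the minimal model program. First I would pass to a resolution $\hat X\to X$, which requires comparing $\pi_1(\hat X)$ with $\pi_1(X)$ and transporting quasi-projectivity of the universal cover across the induced map $\tilde{\hat X}\to\upX$; for general normal singularities this comparison is already a technical point. On the smooth model I would then run the MMP, which terminates---granting abundance---at a model $X_m$ with $K_{X_m}$ nef. As $\pi_1$ is unchanged by divisorial contractions and flips, $\upX$ and $\tilde{X_m}$ are birational, but one must still check that quasi-projectivity of the universal cover survives these modifications, which upstairs are non-proper birational maps rather than honest contractions.

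Abundance then makes $K_{X_m}$ semiample; let $g\colon X_m\to Z$ be the resulting fibration onto the canonical model, with general fiber $G$ satisfying $K_G\equiv 0$ and base $Z$ of general type (or a point). If $\dim Z=0$ then $K_{X_m}\equiv 0$, so $K_{X_m}$ is torsion by abundance, and the Beauville--Bogomolov decomposition in the canonical/klt setting splits a finite \'etale cover of $X_m$ as a product of an abelian variety with simply connected Calabi--Yau and hyperk\"ahler factors. This is precisely a fiber bundle over an abelian variety with simply connected fiber, establishing $(2)$ and, en route, $(3)$. If $\dim Z>0$, the plan is to show by induction on dimension that the general-type base $Z$ becomes simply connected after a finite \'etale cover---its abelian factor must vanish because its Kodaira dimension is maximal---so that $Z$ contributes only to the simply connected factor $F$, while the residual infinite fundamental group lives on the $\kappa=0$ fibers $G$ and is organized into the abelian variety by a relative Beauville--Bogomolov splitting over $Z$.

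The main obstacle is twofold, and both parts concern the case $\dim Z>0$. The conceptual heart is a hyperbolicity/rigidity statement generalizing the elementary fact that the universal cover of a curve of genus $\geq 2$ is the disk and hence not quasi-projective: I would need to prove that a positive-dimensional general-type quotient of $X$ cannot carry infinite fundamental group without forcing a non-quasi-projective factor into $\upX$, so that quasi-projectivity of $\upX$ genuinely forces the base to be virtually simply connected. Making this precise---together with the bookkeeping needed to track quasi-projectivity of the universal cover through the MMP and to carry out the relative splitting over $Z$---is where the real work, and the reliance on abundance, is concentrated.
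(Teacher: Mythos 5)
Your implication (2)$\Rightarrow$(3) has a real gap: you treat the bundle $X'\to A$ as the suspension of a monodromy representation $\pi_1(A)\to\aut(F)$, but a holomorphic fiber bundle is in general not flat, so no such representation exists. Holomorphic $F$-bundles over $A$ are classified by nonabelian cohomology with values in the sheaf of holomorphic maps to $\aut(F)$, not by representations of $\pi_1(A)$; the paper's Example (\ref{nonalg.action.exmp}) is exactly of this type: a $\c^*$-bundle with nonzero Chern class over an elliptic curve is not a suspension, yet its pullback to $\c$ is still trivial. The correct argument (the paper's (\ref{theoremisotrivial})) pulls the bundle back to $\C^m$ and invokes Grauert's Oka principle: a holomorphic fiber bundle with complex Lie structure group over a contractible Stein base is holomorphically trivial. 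So your conclusion is right, but it needs Grauert, not covering space theory; your homotopy-sequence step identifying the pullback with the universal cover is fine.

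The implication (1)$\Rightarrow$(2) is where the proposal does not constitute a proof, and the MMP strategy fails at a structural level, not just in the details you flag. First, a variety satisfying (1) is typically uniruled (e.g.\ $X=A\times\p^n$), and for such $X$ the MMP terminates with a Mori fiber space, never reaching a model with $K$ nef, so abundance and the Iitaka fibration never enter; your plan omits this case. Second, neither hypothesis (1) nor conclusion (2) is a birational invariant --- blowing up $A\times\p^n$ at one point destroys both --- so even a complete structure theorem for a minimal model $X_m$ would not descend to $X$; this is why ``tracking quasi-projectivity through the MMP'' is not merely hard bookkeeping but a dead end. The paper's route is designed to avoid exactly this: Proposition \ref{fg.ab.prop} takes a \emph{minimal-dimensional counterexample} (a normal projective variety with an infinite quasi-projective Galois cover whose group is not almost abelian) and shows that a \emph{single} extremal contraction on it --- Fano fibration or birational --- would produce either a lower-dimensional counterexample or a $\Gamma$-invariant algebraic subvariety of the cover, both contradictions, via the moduli-of-morphisms machinery of Section 2; abundance is used only to exclude the remaining possibility ``$K_X$ nef but not semiample'' on that one variety. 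The rigidity statement you identify as the conceptual heart is precisely the Kobayashi--Ochiai extension theorem (\ref{KO}), which forces the image of $\pi_1$ of a fiber of any map to a general type variety to have finite index in $\Gamma$; and the bundle structure in (2) is obtained not by a relative Beauville--Bogomolov splitting along the Iitaka fibration, but by the separate, unconditional Theorem \ref{alb.general.thm} showing that the Albanese morphism of a suitable finite \'etale cover is a locally trivial fiber bundle. As written, your proposal establishes only (3)$\Rightarrow$(1).
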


Note, however, that 
in general there is no finite, \'etale, Galois  cover $X'\to X$ 
that is a product of  an Abelian variety with $F$.

We want to emphasize that in (\ref{theoremquasiprojective}.1)
we do not assume that the deck transformations are algebraic
automorphisms of the  universal cover and in fact this is not true in general
(\ref{nonalg.action.exmp}).

It is clear that (\ref{theoremquasiprojective}.3)  $\Rightarrow$
(\ref{theoremquasiprojective}.1) and (\ref{theoremquasiprojective}.2) implies
(\ref{theoremquasiprojective}.3)  since every
fiber bundle over $\C^m$ with compact analytic fiber is trivial
(\ref{theoremisotrivial}).

The proof of  (\ref{theoremquasiprojective}.1) $\Rightarrow$
(\ref{theoremquasiprojective}.2) comes in two independent steps,
both of which are more general than needed for (\ref{theoremquasiprojective}).
First we  show in (\ref{fg.ab.prop}) that
the fundamental group of $X$ is almost abelian, that is, it
 contains an abelian subgroup of finite index.
Here we
 use the  abundance conjecture to rule out some possible counter examples.

Once we know that the fundamental group of $X$ is almost abelian,
by passing to a finite cover we may assume that
it is in fact abelian. Then we prove directly in (\ref{alb.general.thm})
 that the 
 Albanese morphism is a fiber bundle.
 This part does not rely on  any conjectural assumptions.

\begin{proposition}\label{fg.ab.prop}
 Let $X$ have the smallest dimension among all  normal, projective varieties
that have  an infinite,  quasi-projective, Galois cover $\tilde X\to X$
whose Galois group is not  almost abelian.

Then $X$ is smooth and  its canonical class $K_X$ is nef but not semi-ample.
(That is, $(K_X\cdot C)\geq 0$ for every algebraic curve $C\subset X$
but $\o_X(mK_X)$ is not generated by global sections for any $m>0$.)
\end{proposition}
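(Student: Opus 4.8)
The plan is to treat $X$ as a minimal counterexample and run the minimal model program, using the minimality of $\dim X$ to exclude every outcome but the claimed one. The whole argument rests on the principle that the three features defining a counterexample---the cover $\tilde X\to X$ being infinite, quasi-projective, and Galois with non-almost-abelian group---are inherited by the varieties produced along the way, so that any counterexample of strictly smaller dimension is absurd.

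First I would reduce to the case $X$ smooth. For a resolution $\pi\colon X'\to X$ the map $\pi_*\colon\pi_1(X')\to\pi_1(X)$ is an isomorphism, so $X'\times_X\tilde X$ is a connected \'etale Galois cover of $X'$ with the same Galois group, and it is quasi-projective, being projective over $\tilde X$; hence $X'$ is again a counterexample of the same dimension. Next I would rule out that $X$ is uniruled: otherwise its maximal rationally connected fibration $X\map Z$ has simply connected general fibre, so $\pi_1(X)\cong\pi_1(Z)$ by the homotopy sequence, the cover descends to $Z$, and $\dim Z<\dim X$ contradicts minimality. Thus $K_X$ is pseudo-effective, and running the MMP I reach a model with $K$ nef; each birational step leaves $\pi_1$ unchanged and contracts or flips a uniruled locus, so the counterexample structure persists, while a fibre-type contraction is excluded exactly as for the MRC fibration. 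Combined with the first reduction this establishes that $X$ may be taken smooth with $K_X$ nef.

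To obtain non-semi-ampleness I would argue by contradiction: if $K_X$ were semi-ample, the associated Iitaka fibration $g\colon X\to W$ is a morphism with $\dim W=\kappa(X)$. The case $\kappa(X)=\dim X$ (general type) is excluded because the quasi-projectivity of the universal cover is incompatible with ample canonical class and infinite fundamental group. If $\kappa(X)=0$ then $K_X\equiv 0$, and the Beauville--Bogomolov decomposition of a finite \'etale cover writes it as a product of an Abelian variety with simply connected Calabi--Yau factors, whose fundamental group is almost abelian---a contradiction. The remaining range $0<\kappa(X)<\dim X$ requires analysing the extension $1\to\Pi\to\pi_1(X)\to\pi_1(W)\to 1$, where $\Pi$ is the image of the fundamental group of a general fibre $F$; here $K_F\sim_{\mathbb{Q}}0$, so $F$ is abelian-by-simply-connected up to finite cover, and $\pi_1(W)$ is almost abelian by minimality since $\dim W<\dim X$.

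The hard part is precisely this last step. A general fibre with $K_F\equiv 0$ need not be simply connected---it may be an Abelian variety---so $\pi_1$ does not simply descend to the base, and an extension of an almost-abelian group by an almost-abelian group can fail to be almost abelian (witness the integral Heisenberg group). The main obstacle is therefore to use the quasi-projectivity of $\tilde X$, restricted along $g$, to control how the fibrewise and base covers glue and to force this extension to split up to finite index, making $\pi_1(X)$ almost abelian and contradicting the choice of $X$. Once semi-ampleness is excluded in this way, $K_X$ is nef but not semi-ample, as claimed. The same descent of the infinite quasi-projective cover---through the MRC and Iitaka fibrations and across the birational MMP steps---is the recurring technical difficulty throughout.
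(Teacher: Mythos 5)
Your skeleton (minimal counterexample; smoothness; $K_X$ nef via contraction theory; a trichotomy on $\kappa(X)$ if $K_X$ were semi-ample) matches the paper's, but at every step you assume that quasi-projectivity of the cover is inherited by the auxiliary varieties you construct, and that assumption is precisely the content of the paper, not a formality: the deck transformations of $\tilde X$ are only holomorphic, not algebraic, so algebraic structure does not transfer along pull-backs, descents, or birational modifications. Concretely: (a) in your smoothness reduction, $\pi_1(X')\to\pi_1(X)$ is in general only surjective for a resolution of a normal variety (the kernel can be nontrivial, e.g.\ for the cone over an elliptic curve), and, worse, $X'\times_X\tilde X$ is the blow-up of $\tilde X$ along the analytic pull-back $\pi^{*}\sI$ of the center of the resolution; since $\pi$ is not algebraic there is no reason this ideal, or the resulting modification, is algebraic, so ``quasi-projective, being projective over $\tilde X$'' is unjustified. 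The paper never resolves: by minimality, $\tilde X$ admits no proper closed subvariety invariant under a finite-index subgroup of $\Gamma$ (normalize a component and quotient to get a lower-dimensional counterexample), and this applies to $\sing \tilde X$, which \emph{is} algebraic because it is the singular locus of the chosen quasi-projective structure; hence $X$ itself is smooth, which is what the statement asserts --- your reduction would at best show that \emph{some} minimal counterexample is smooth. (b) For nefness you run the full MMP, which requires termination of flips (open in general, and not granted by abundance) and requires the cover of each successive model to remain quasi-projective; there is no algebraic flip of an infinite cover, so this ``persistence'' is exactly as hard as the original problem. The paper needs only one extremal contraction $g:X\to Z$: in the fiber type case, (\ref{main.constr}) together with (\ref{qp.quot.lem}) produce a locally trivial fibration $\tilde X\to\tilde Z$ with $\tilde Z$ quasi-projective (via Chow varieties), so the $\Gamma$-action descends and contradicts minimality; in the birational case, (\ref{main.constr.bir}) makes $\pi^{-1}(\ex(g))$ an algebraic $\Gamma$-invariant subvariety, contradicting (a). Your ``the cover descends to $Z$'' for the MRC fibration has the same hole: the topological cover descends, but its quasi-projectivity does not, and supplying it is what (\ref{min.constr.lem})--(\ref{qp.quot.lem}) are for.

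(c) Finally, the case $0<\kappa(X)<\dim X$ --- which you correctly single out as the hard part --- is left entirely open in your proposal, and it is where most of the paper's proof lives: a relative Beauville--Bogomolov argument combined with (\ref{main.constr}), (\ref{bundles.lem}) and (\ref{aut.gp.lem}) splits off the simply connected factor of the fiber and reduces to abelian fibers; the relative $\Gamma$-Shafarevich map of \cite{Kol93} eliminates positive-dimensional subvarieties of the fiber whose fundamental group has finite image in $\Gamma$; and then Koll\'ar's results on generically large fundamental groups plus Nakayama's reduction to a smooth abelian-scheme Iitaka fibration over a base of general type let the Kobayashi--Ochiai theorem (\ref{KO}) force $\im[\pi_1(F)\to\Gamma]$ to have finite index, whence $\Gamma$ is almost abelian. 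Without an argument here, and with the gaps in (a) and (b), the proposal is an outline with the right shape but with the paper's key mechanisms --- producing $\Gamma$-invariant \emph{algebraic} subvarieties and \emph{quasi-projective} quotients --- missing.
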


The conclusion would contradict the following, so called
{\it abundance conjecture} \cite[Sec.2]{reid-minmod}.
 Thus if (\ref{abund.conj}) holds then
$X$ as in (\ref{fg.ab.prop}) can not exist. 
Thus if a   normal, projective variety $Y$  has an 
 infinite,  quasi-projective, Galois cover $\tilde Y\to Y$
then the Galois group is  almost abelian.

\begin{conjecture}\label{abund.conj}
Let $X$ be a smooth projective variety such that  $K_X$ is nef.
Then $K_X$ is  semi-ample.
\end{conjecture}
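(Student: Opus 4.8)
The plan is to reduce semi-ampleness of the nef canonical class to a purely numerical statement and then to argue by induction on $\dim X$ through a suitable fibration. Write $n=\dim X$, let $\nu=\nu(K_X)$ be the numerical dimension and $\kappa=\kappa(X)$ the Kodaira dimension; since $K_X$ is nef one always has $0\le\kappa\le\nu\le n$. The key reduction is Kawamata's theorem that a nef canonical class with $\kappa=\nu$ is automatically semi-ample, so it suffices to establish the equality $\kappa(X)=\nu(X)$. In practice this is attacked in two stages: first \emph{nonvanishing}, $\kappa\ge 0$, and then the upgrade from $\kappa\ge 0$ to $\kappa=\nu$ via the Iitaka fibration. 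The whole strategy is thus organised around producing a pluricanonical section and then promoting the resulting system to a base-point-free one.

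First I would dispose of the two extremal values of $\nu$, where the statement is already unconditional. If $\nu=n$, then $K_X$ is nef and big and the base-point-free theorem of Kawamata immediately gives that $\o_X(mK_X)$ is globally generated for $m\gg 0$. If $\nu=0$, then $K_X\equiv 0$, and that such a $K_X$ is torsion (hence semi-ample) is known, for instance through the Beauville--Bogomolov decomposition after a finite \'etale cover. These two cases fix the shape of the induction: the genuine content lives in the intermediate range $0<\nu<n$.

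For $0<\nu<n$ the plan is to pass to a fibration and descend by induction on dimension. Granting nonvanishing, so that $\kappa\ge 0$, replace $X$ by a smooth model of its Iitaka fibration to obtain a morphism $f\colon X\to Y$ with connected fibres and $\dim Y=\kappa$, whose general fibre $F$ has $\kappa(F)=0$; by the induction hypothesis abundance holds on $F$, so on a minimal model $K_F\equiv 0$. The canonical bundle formula of Fujino--Mori then writes $K_X\sim_{\mathbb{Q}}f^*(K_Y+B_Y+M_Y)$ with discriminant $B_Y$ and moduli part $M_Y$, and semi-ampleness of $K_X$ becomes equivalent to that of $K_Y+B_Y+M_Y$ on the lower-dimensional base $Y$. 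This one attacks by the induction hypothesis for the $K_Y+B_Y$ part together with the expected semi-ampleness of the moduli divisor $M_Y$, while the numerical equality $\kappa=\nu$ reduces correspondingly to the bigness of $K_Y+B_Y+M_Y$.

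The main obstacle is precisely this intermediate range, and it is here that the conjecture remains open beyond dimension three. Two independent difficulties surface. The first is \emph{nonvanishing} in full generality, itself a major open problem equivalent to producing a single effective pluricanonical divisor, and without which even the Iitaka fibration above is unavailable. The second is the \emph{semi-ampleness of the moduli part} $M_Y$ together with the control of $\kappa=\nu$ after descent; neither follows formally from the reductions above and both require genuinely new input, such as positivity results for direct images or Hodge-theoretic control of the variation of the fibration. Accordingly I would take the low-dimensional verifications as the template --- abundance for threefolds due to Miyaoka and Kawamata, and its log extension by Keel--Matsuki--McKernan --- and regard the passage to arbitrary $\dim Y$ in the fibration step as the crux that the present methods do not resolve.
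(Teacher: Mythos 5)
You were asked to prove a statement that the paper itself does not prove and does not claim to prove: (\ref{abund.conj}) is the abundance conjecture, stated precisely so that Proposition \ref{fg.ab.prop} can be combined with it as a hypothesis. The paper only records that it is known for $\dim X\leq 3$, citing \cite{f&a}, and all the main results are explicitly conditional on it. So there is no proof in the paper to compare yours against, and your proposal --- candidly --- is not a proof either: it is an accurate survey of the standard reduction strategy (Kawamata's theorem that a nef $K_X$ with $\kappa=\nu$ is semi-ample; the base-point-free theorem for $\nu=n$; torsionness of $K_X\equiv 0$ via Beauville--Bogomolov for $\nu=0$; the Fujino--Mori canonical bundle formula for the descent along the Iitaka fibration), and you correctly identify the two places where it breaks: nonvanishing, and the positivity (semi-ampleness/bigness) of the moduli part $M_Y$ after descent. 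These are exactly the open problems; in particular the induction on dimension you set up does not close, because the fibration step outputs a pair $(Y,B_Y+M_Y)$ whose moduli divisor is not covered by the inductive hypothesis --- its conjectural b-semi-ampleness (Prokhorov--Shokurov) is independent new input, and nonvanishing in dimension $n$ is not known even granting abundance in all dimensions $<n$.

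One concrete error worth fixing even in a survey: your opening claim that ``since $K_X$ is nef one always has $0\le\kappa\le\nu\le n$'' is false as stated. The inequality $\kappa\ge 0$ for nef $K_X$ \emph{is} the nonvanishing conjecture, which you yourself list as open two paragraphs later; unconditionally one only knows $\kappa\le\nu\le n$ (with $\kappa=-\infty$ allowed). Apart from this internal inconsistency, your account matches the state of the art, including the verified cases in dimension three (Miyaoka, Kawamata; log version by Keel--Matsuki--McKernan). The honest summary is: the gap in your proposal is the conjecture itself, and the paper's authors made the same assessment by assuming (\ref{abund.conj}) rather than attempting it.
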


Note that the abundance conjecture is frequently stated for
varieties with log canonical singularities, even for  log canonical
pairs, but we need only the smooth case.
 The conjecture is known to hold if
$\dim X\leq 3$; see \cite{f&a} for a detailed treatment.

Next we study the  quasi-projectivity of Abelian covers.

\begin{theorem}\label{alb.general.thm} Let $X$ be a normal, projective variety
and $\alpha:X\to A$ a morphism to an Abelian variety.
Let $\pi:\tilde A\to A$ be an \'etale Galois 
cover with group $\Gamma$ such that $\tilde A$ has no compact analytic subvarieties.
 By pull-back we obtain
$\tilde\alpha:\tilde X\to \tilde A$.

If $\tilde X$ is quasi-projective then $\alpha:X\to A$ is a
locally trivial fiber bundle. 
\end{theorem}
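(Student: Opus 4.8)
The plan is to prove isotriviality first (all fibres of $\alpha$ mutually isomorphic) and then manufacture local trivializations. Since $\pi$ is étale, the fibre of $\tilde\alpha$ over $\tilde a$ is the fibre of $\alpha$ over $\pi(\tilde a)$, so $\tilde\alpha$ and $\alpha$ have the same fibres, and these are projective because $\alpha$ is proper. I would begin by pinning down $\tilde A$: writing $A=\mathbb{C}^n/\Lambda$, the cover corresponds to a subgroup $\tilde\Lambda\subseteq\Lambda$ with $\tilde A=\mathbb{C}^n/\tilde\Lambda$, and the absence of compact subvarieties forces $\tilde\Lambda\otimes\mathbb{R}$ to contain no nonzero complex subspace (a complex subspace meeting $\tilde\Lambda$ in a lattice would produce a subtorus). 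Choosing complex coordinates adapted to this totally real subspace identifies $\tilde A$ biholomorphically with the semi-abelian variety $(\mathbb{C}^*)^r\times\mathbb{C}^{n-r}$, on which the deck group $\Gamma=\Lambda/\tilde\Lambda$ acts cocompactly by \emph{algebraic} automorphisms (scalar multiplications on the $\mathbb{C}^*$-factors, translations on the $\mathbb{C}$-factors). The same hypothesis makes the fibration intrinsic: any compact subvariety $S\subseteq\tilde X$ has compact, hence pointlike, image in $\tilde A$, so the fibres of $\tilde\alpha$ are exactly the maximal compact subvarieties of $\tilde X$.

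Next I would algebraize using quasi-projectivity. Fix an embedding $\tilde X\subseteq\mathbb{P}^M$ and set $L=\mathcal{O}_{\mathbb{P}^M}(1)|_{\tilde X}$, which restricts to an ample polarization on every fibre. Sending $\tilde a$ to the cycle $[\tilde X_{\tilde a}]$ defines an injective holomorphic map $c:\tilde A\to\chow{\mathbb{P}^M}$ (distinct fibres are disjoint, hence distinct cycles), whose image $B$ is a quasi-projective subvariety of the Chow variety over which the tautological family restricts to $\tilde X$. Thus $\tilde\alpha$ becomes an \emph{algebraic} family of polarized projective varieties over the quasi-projective base $B\cong\tilde A$, and it carries an algebraic modular morphism $\mu:B\to\mathcal{M}$ to the moduli of polarized fibres of the relevant Hilbert polynomial. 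Because $\tilde X=\pi^*X$, through $c$ we have $\mu\circ c=\phi\circ\pi$, where $\phi:A\to\mathcal{M}$ is the modular map downstairs; in particular $g:=\mu\circ c:\tilde A\to\mathcal{M}$ is $\Gamma$-invariant.

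Isotriviality now amounts to $\phi$ being constant, and here lies the step I expect to be the main obstacle. If $g$ is a morphism of varieties for the \emph{semi-abelian} structure on $\tilde A$, then invariance under the Zariski-dense, cocompact group $\Gamma$ of algebraic automorphisms forces $g$ to be constant, whence $\phi$ is constant and all fibres are isomorphic as polarized varieties. The difficulty is precisely the algebraicity of $g$: a priori $c$ is only a biholomorphism between two quasi-projective structures on $\tilde A$ (the semi-abelian one and the Chow-theoretic $B$), and the expression $g=\phi\circ\pi$ is manifestly transcendental. One must show that this transcendental description is compatible with the algebraic one, i.e. that $c$ (equivalently the modular map) is algebraic, so that $\Gamma$-periodicity can be upgraded from ``periodicity of a meromorphic map'' (which does not force constancy) to ``periodicity of a morphism'' (which does). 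In dimension one this rigidity is automatic: a quasi-projective curve biholomorphic to $\mathbb{C}$ is $\mathbb{A}^1$, a biholomorphism of $\mathbb{C}$ is affine hence algebraic, and a rational function invariant under a lattice of translations is constant; the higher-dimensional version of this comparison of structures is the crux.

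Finally, granting isotriviality, I would pass to local triviality. Let $(F,L_F)$ be the common polarized fibre and form the relative isomorphism scheme $P=\mathrm{Isom}_A\big((A\times F,L_F),(X,L)\big)\to A$, which is quasi-projective by the theory of polarized Hilbert schemes. Isotriviality says $P\to A$ is surjective, and since we work over $\mathbb{C}$ the fibrewise automorphism group $\aut(F,L_F)$ is smooth; hence $P\to A$ is a torsor under a smooth $A$-group scheme and is itself smooth and surjective. A smooth surjection admits local holomorphic (indeed étale-local) sections, and a section over an open $U\subseteq A$ is exactly a trivialization $X|_U\cong U\times F$. Therefore $\alpha:X\to A$ is a locally trivial fibre bundle, as claimed.
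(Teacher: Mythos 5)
Your proposal has several genuine gaps, one of which you concede yourself. First, the structural claim about $\tilde A$ is false: the absence of compact analytic subvarieties does \emph{not} force $\tilde\Lambda\otimes\R$ to contain no complex subspace; it only says that no nonzero complex subspace meets $\tilde\Lambda$ in a full lattice. Consequently $\tilde A$ need not be $(\C^*)^r\times\C^{n-r}$: the paper itself remarks (citing Abe--Kopfermann) that $\tilde A$ can have no compact analytic subvarieties and yet fail to be Stein, which is impossible for $(\C^*)^r\times\C^{n-r}$. So the ``cocompact group of algebraic automorphisms'' your rigidity argument relies on is simply not available. Second, the coarse moduli space $\mathcal{M}$ of polarized fibres, with an algebraic modular morphism, does not exist in the generality required here: the fibres of $\alpha$ are arbitrary (possibly singular) projective varieties. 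Third --- and this is the gap you acknowledge --- everything hinges on the algebraicity of $g=\mu\circ c$, i.e.\ on reconciling the transcendental uniformization of $\tilde A$ with the Chow-theoretic algebraic structure; you offer no mechanism for this, and it is precisely the obstruction the paper isolates at the start of Section 2: the deck action on $\tilde X$ is holomorphic but in general not algebraic, so $\Gamma$-invariance cannot be transported into the algebraic category by fiat. Finally, your argument never confronts the case where $\alpha$ is not surjective or has disconnected fibres (e.g.\ $X$ a general-type subvariety of $A$); there the conclusion fails, so any correct proof must use the quasi-projectivity of $\tilde X$ to produce a contradiction, which your isotriviality-plus-Isom-scheme scheme cannot do since it presupposes a genuine fibration.

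For comparison, the paper sidesteps your crux by reversing the direction of parametrization: instead of maps \emph{out of} $\tilde A$ (to Chow or to moduli), it studies spaces of finite morphisms \emph{from} compact fibres \emph{into} $\tilde X$. Because the source is compact, the composition of such a morphism with a deck transformation is again algebraic (apply Chow's theorem to its graph in $F\times\barX$), so the swept-out loci are automatically $\Gamma$-invariant algebraic sets. Lemma \ref{main.constr} then gives a dichotomy --- either $\tilde\alpha$ is a locally trivial bundle, or there is a proper, closed, $\Gamma$-invariant algebraic subset --- and the second branch is eliminated by induction on dimension. The reduction to the surjective, connected-fibre case is done via the Stein factorization together with Kawamata's theorem (\ref{lemmaueno}) and Kobayashi--Ochiai (\ref{KO}); that is where the hypothesis that $\tilde A$ has no compact subvarieties and the quasi-projectivity of $\tilde X$ genuinely enter.
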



In  light of the previous statements, we can strengthen Theorem \ref{theoremquasiprojective} as follows:

\begin{corollary}\label{better.main.thm} 
Assume that the abundance conjecture (\ref{abund.conj}) holds.
Let $X$ be a normal, projective variety and $\tilde X\to X$
an infinite \'etale Galois cover such that $\tilde X$ is  quasi-projective. 
Then there exist
\begin{enumerate}
\item  a finite, \'etale, Galois  cover $X'\to X$, 
\item a morphism to an Abelian variety $\alpha:X'\to A$ which is a 
locally trivial fiber bundle  and
\item an \'etale cover $\pi: \tilde A\to A$ such that $\tilde A$ has no 
compact analytic subvarieties
\end{enumerate}
such that  $\tilde X$ pulls-back from $ \tilde A\to A$, that is
$\tilde X\simeq X'\times_A \tilde A.$
\end{corollary}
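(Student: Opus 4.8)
The plan is to assemble the two main results through a Galois-theoretic argument that exhibits the given cover as pulled back from a suitable quotient of the Albanese torus.

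\textbf{Reduction to a torsion-free abelian cover.} Write $G=\gal(\tilde X/X)$. Since $\tilde X\to X$ is infinite, quasi-projective and Galois, the consequence of Proposition \ref{fg.ab.prop} (valid under the abundance conjecture \ref{abund.conj}) shows that $G$ is almost abelian. Hence $G$ contains an abelian subgroup of finite index, and, after splitting off the finite torsion part and passing to the normal core, a torsion-free abelian normal subgroup $N\trianglelefteq G$ of finite index; as $G$ is infinite, $N\cong\Z^r$ with $r\geq 1$. Set $X'=\tilde X/N$. Then $X'\to X$ is the finite, \'etale, Galois cover of (1), with group $G/N$, while $\tilde X\to X'$ is an infinite abelian Galois cover with group $N\cong\Z^r$; its total space $\tilde X$ is unchanged, hence still quasi-projective.

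\textbf{Factoring through the Albanese and choosing $A,\tilde A$.} The cover $\tilde X\to X'$ is classified by a surjection $\phi\colon \pi_1(X')\onto N\cong\Z^r$ with kernel $\pi_1(\tilde X)$. Since the target is torsion-free abelian, $\phi$ kills the commutator subgroup and the torsion of $H_1$, so it factors through $a_*\colon\pi_1(X')\to\pi_1(A_0)$, where $a\colon X'\to A_0:=\Alb(X')$ is the Albanese morphism; write $\phi=\psi\circ a_*$ with $\psi\colon\pi_1(A_0)\onto\Z^r$, and $A_0=V_0/\Lambda_0$, $K:=\ker(\psi\colon\Lambda_0\onto\Z^r)$. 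I would now cut $A_0$ down by the largest subtorus compatible with $K$: let $W\subseteq V_0$ be the maximal complex subspace for which $W\cap K$ is a full-rank lattice in $W$ (the tangent space to the maximal compact subtorus of $V_0/K$). Then $B:=W/(W\cap\Lambda_0)$ is an abelian subvariety of $A_0$; set $A:=A_0/B$ with quotient $\beta\colon A_0\to A$ and $\alpha:=\beta\circ a$. Because $W\cap K$ has finite index in $W\cap\Lambda_0$ and $\Z^r$ is torsion-free, $\psi$ vanishes on $W\cap\Lambda_0$ and descends to $\bar\psi\colon\pi_1(A)\onto\Z^r$; let $\pi\colon\tilde A\to A$ be the corresponding \'etale Galois cover with group $\Z^r$. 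Maximality of $W$ translates into the statement that $\ker\bar\psi$ contains no sublattice spanning a complex subspace of $V_0/W$, i.e. $\tilde A$ has no compact analytic subvarieties, which is (3).

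\textbf{Identification and conclusion.} By construction $\phi=\bar\psi\circ\alpha_*$, so the pull-back $X'\times_A\tilde A\to X'$ is the cover classified by $\alpha_*^{-1}(\ker\bar\psi)=\ker\phi=\pi_1(\tilde X)$; since $\phi$ is surjective this cover is connected, whence $\tilde X\simeq X'\times_A\tilde A$. Finally $\tilde A$ has no compact analytic subvarieties and its pull-back $\tilde X$ is quasi-projective, so Theorem \ref{alb.general.thm} applies to $\alpha\colon X'\to A$ and shows that $\alpha$ is a locally trivial fiber bundle, giving (2).

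The part I expect to require the most care is the choice in the second paragraph: one must take $A$ to be \emph{precisely} the quotient of $\Alb(X')$ by the maximal subtorus attached to $K$, so that \emph{simultaneously} $\tilde A$ carries no compact analytic subvarieties (the hypothesis of Theorem \ref{alb.general.thm}) and $\tilde X$ is recovered \emph{exactly} as the fiber product, not merely as a further cover of it. Verifying that the subspace $W$ is rational with respect to the relevant lattice, that $B$ is genuinely an abelian subvariety, and that the resulting pull-back is connected are the technical points; the group theory of the first paragraph and the Hurewicz-type factorization of $\phi$ are routine.
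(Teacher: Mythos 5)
Your proof is correct and takes essentially the same route as the paper's: pass to a finite cover so the remaining deck group $\Gamma'$ is free abelian, factor the classifying homomorphism through the Albanese of $X'$ (your ``routine'' Hurewicz-type factorization is exactly the paper's Lemma \ref{alb=ab.lem}, which is what makes this work for \emph{normal}, possibly singular $X'$), quotient by the maximal abelian subvariety whose lattice sits (up to finite index) in the kernel, and conclude with Theorem \ref{alb.general.thm}. Your explicit lattice description of $W$ is just the paper's ``maximal sub-Hodge structure contained in the kernel of $H_1(A,\Z)\to\Gamma'$'' spelled out, and your connectedness check of $X'\times_A\tilde A$ makes explicit what the paper leaves implicit.
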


We do not know if the converse of (\ref{better.main.thm})
holds or not. Every \'etale cover  $\tilde A\to A$ of an Abelian variety is 
 quasi-projective \cite{CC91}. Note, however, that it can happen that
 $\tilde A$ has no 
compact analytic subvarieties yet it is not Stein \cite{AK01}.
 Even if $X\to A$ is a $\p^1$-bundle,
we do not know if $X\times_A \tilde A$  is  quasi-projective or not.

\begin{say}[The non-algebraic case]

{\rm More generally, it is interesting to
study compact complex manifolds $M$ whose universal cover
$\tilde M$ is a Zariski open submanifold of a 
compact complex manifold $\bar M$.  Besides the algebraic cases,
such examples are given by Hopf manifolds, compact nilmanifolds or more generally any quotient $G\backslash\Gamma$ where $G$ is a (simply connected) non-commutative linear algebraic group and $\Gamma$ a cocompact lattice (see for instance \cite[\S 3.4 and 3.9]{Ak95}, \cite{Win98}). Their classification seems rather difficult.

The problem becomes much more tractable if one assumes that
$M$ (and possibly also $\bar M$) are K\"ahler.
We expect that in this case (\ref{theoremquasiprojective})
should hold, but several steps of the proof need to be changed.
We plan to discuss these in a subsequent paper.}
\end{say}

\begin{say}[The quasi-projective case]
{\rm Our methods rely  on the study of
compact subvarieties of $\tilde X$, but it is possible that
similar results hold if $X$ is quasi-projective.
Very little seems to be known. For instance, we do not know
which  quasi-projective varieties $X$ have $\C^n$ as their universal cover.
The obvious guess is that every such $X$ has a 
finite, \'etale, Galois  cover $X'\to X$
such that $X'\cong \C^n/\Z^m$ where $m\leq 2n$ and
$\Z^m$ acts on $\C^n$ by translations.

The strongest result would be the following analog of 
(\ref{better.main.thm}).

{\it Question.} Let $X$ be a normal, quasi-projective variety and $\tilde X\to X$
an infinite \'etale Galois cover such that $\tilde X$ is  quasi-projective. 
Does there exist 
\begin{enumerate}
\item a finite, \'etale, Galois  cover $X'\to X$,
\item a morphism to a quasi-projective abelian group $\alpha:X'\to A$ that
 is a locally trivial fiber bundle and
\item an \'etale cover $\tilde A\to A$ such that $\tilde A$ has no compact analytic subvarieties
\end{enumerate}
such that $\tilde X\simeq X'\times_A \tilde A$ ?
}
\end{say} 

{\bf Acknowledgements.} The authors want to thank D.~Greb, T.~Peternell and C.~Voisin for  useful comments and references. 
B.C. and A.H. were partially supported by the A.N.R. project ``CLASS''.
Partial financial support for J.K.  was provided by  the NSF under grant number 
DMS-0758275.

\section{Algebraic subvarieties of universal covers}

Let $X$ be a projective variety and $\pi:\tilde X\to X$
an infinite Galois cover with group $\Gamma$
such that $\tilde X$ is  biholomorphic to a  quasi-projective variety.
There is no reason to assume that such a  quasi-projective variety
is unique. In what follows, we fix one such  quasi-projective variety
and say that  $\tilde X$ is
quasi-projective.
If $Z\subset X$ is a closed subvariety, then
its preimage $\tilde Z:=\pi^{-1}(Z)\subset \tilde X$ is 
a closed, analytic subspace of  $\tilde X$,
but it is rarely quasi-projective.

For instance, let $A$ be an Abelian variety
with universal cover $\pi:\c^n\to A$.
If $Z\subsetneq A$ is a closed subvariety, then
its preimage $\tilde Z\subset \c^n$ is never quasi-projective 
by (\ref{alb.general.thm}).
Similarly, let $E$ be a rank 2 vector bundle on $A$
that is not an extension of 2 line bundles.
Set $X=\p_A(E)$ with universal cover $\pi:\tilde X\to X$.
One can see that  if $Z\subsetneq X$ is a closed subvariety, then
its preimage $\tilde Z\subset \c^n$ is never quasi-projective.

We aim to exploit this scarcity of  $\Gamma$-invariant
subvarieties as follows.
If $\tilde X$ has no positive dimensional compact subvarieties
then we are done by  \cite{Nak99} 
(though this is not how our proof actually goes). 
Thus let $F\subset \tilde X$ be a  positive dimensional 
compact subvariety. 
Let $\loc(F, \tilde X) \subset\tilde X$ denote the union of the images
of all finite morphisms $ F\to \tilde X$.
(We are mainly interested in embeddings $ F\into \tilde X$,
but allowing finite maps $ F\to \tilde X$ works better under
finite \'etale covers. We restrict to finite maps mostly  
to avoid the constant maps $ F\to \tilde X$.)
It is clear that $\loc(F, \tilde X)$ is   $\Gamma$-invariant.
Unfortunately, in general we can only prove
that  $\loc(F, \tilde X)$  is  a {\em countable} union of  (locally closed)
algebraic subvarieties of $\tilde X$.
There are, however, 2 special cases where we show that
$\loc(F, \tilde X)$  is  a  (possibly reducible and locally closed)
algebraic subvariety of $\tilde X$.
If $\dim \loc(F, \tilde X)<\dim  \tilde X$, then we use induction to
describe $\loc(F, \tilde X)$ and arrive at a contradiction.
 If $\dim \loc(F, \tilde X) =\dim  \tilde X$, then
we obtain a strong structural description of $ \tilde X$.

\begin{definition} \label{definitionmor}
Let $U$ and $V$ be normal, quasi-projective varieties, and  $U\to V$  a 
flat, projective morphism with a relatively ample divisor $H_V$.
Let $Y$ be a normal, quasi-projective variety and $L$ the restriction of 
an ample line bundle on
some completion $Y \subset \barY$ to $Y$.

We denote  by $\mor(U/V, Y, H_V, L, d)$ the moduli space of finite  morphisms
$\phi:U_v\to Y$ of degree $d$, that is,  if $U_v$ is a fiber
and $\Gamma_\phi \subset U \times Y$ is the graph of $\phi$, then
\[
\bigl(p_U^* H_V+ p_Y^* L \bigr)^{\dim U/V} \cdot \Gamma_\phi=d.
\]
\end{definition}
Note that our ``degree'' is not the degree of the image of $\phi$,
rather the degree of the graph of $\phi$.  
Since the (relative) cycle spaces $\Chow{U/V}$ and $\Chow{\barY}$ are 
projective (over the base $V$)
and the property of being a graph of a morphism is open in the 
Zariski topology, we see that  
$\mor(U/V, Y, H_V, L,  d)$ 
is a quasi-projective subvariety of $\Chow{U\times \bar Y/V}$.
(This would fail if we considered only the degree of the image of $\phi$.)

In order to simplify the notation, we will abbreviate 
$\mor(U/V, Y, H_V, L,  d)$ by
$\mor(U/V, Y, d)$. We have universal families and morphisms
$$
\univ(U/V, Y, d)\to \mor(U/V, Y, d)\qtq{and} \Phi_d: \univ(U/V, Y, d)\to Y.
$$
Set $\mor(U/V, Y):=\bigcup_d\mor(U/V, Y, d)$ with  universal family
$$
\mor(U/V, Y)\leftarrow \univ(U/V, Y)\stackrel{\Phi}{\to} Y.
$$
Note that $\mor(U/V, Y)$ and $ \univ(U/V, Y)$ are, in general,
countable unions of  quasi-projective varieties.

The union of all the images of fibers of $U\to V$
by degree $d$ maps
$$
\loc(U/V, Y, d):=\Phi_d\bigl(\univ(U/V, Y, d)\bigr)\subset Y
$$
 is a constructible algebraic
subset of $Y$ and
$$
\loc(U/V, Y):=\Phi\bigl(\univ(U/V, Y)\bigr)\subset Y
$$ is, in general, 
a countable union of constructible algebraic
subsets.

Our main interest is in the case $Y=\tilde X$ where
$\tilde X\to X$ is Galois with group $\Gamma$.
If we can take $L$ to be $\Gamma$-equivariant
then each $\loc(U/V, \tilde X, d)$ is  $\Gamma$-invariant.
We see, however, no a priori reason why this should be possible.
First, since the $\Gamma$-action is holomorphic but in general
not algebraic, we do not even know that pulling back by
$\gamma\in \Gamma$ maps an algebraic coherent sheaf on $\tilde X$ to
an  algebraic coherent sheaf.
Second, even if we know that  the $\Gamma$-action is  algebraic,
there need not be any  $\Gamma$-equivariant ample line bundles.

\begin{say} {\bf The main construction.} \label{main.1.constr}
{\rm
Let $U$ and $V$ be normal, quasi-projective varieties, and  $U\to V$  a 
flat, projective morphism with a relatively ample divisor $H_V$.
Let $X$ be a projective variety  and 
$\pi:\tilde X\to X$  an infinite Galois cover
with group $\Gamma$ such that $\upX$ is quasi-projective. 
Let furthermore $L$ be a line bundle that is the restriction of an ample line
 bundle on some 
completion $\upX \subset \barX$ to $\upX$. 

Consider $\mor_{\ell}(U/V, X) \subset \mor(U/V, X)$, parametrizing 
those  morphisms
$\phi:U_v\to X$ that can be lifted  to $\tilde\phi:U_v\to \tilde X$.
Note that
$\Gamma$ acts freely on $ \mor(U/V, \tilde X)$ and
we have a natural holomorphic map
$$
\pi_M:\mor(U/V, \tilde X)\to\mor(U/V, \tilde X)/\Gamma= \mor_{\ell}(U/V, X).
$$
Let $W \subset \mor_{\ell}(U/V, X)$ be an irreducible component
and $\tilde W:=\pi_M^{-1}(W)\subset \mor(U/V, \tilde X)$ its preimage.
Every irreducible component of  $\tilde W$ 
is quasi-projective, but usually there are  infinitely many 
and $\Gamma$ permutes them.
Thus we do not get any new algebraic variety with
$\Gamma$-action.

There are, however, two important cases when such a 
 $\tilde W$ has finitely many irreducible components, hence
is itself quasi-projective. We discuss these in 
(\ref{min.constr.lem}) and (\ref{main.constr.bir}). }
\end{say}

\begin{lemma}\label{min.constr.lem}
Let $W \subset \mor_{\ell}(U/V, X)$ be an irreducible component
and   $W\leftarrow \univ_W\to X$  the corresponding universal family.
Assume that $\univ_W\to X$  is dominant.

Then  $\tilde W:=\pi_M^{-1}(W)\subset \mor(U/V, \tilde X)$
is quasi-projective. Moreover, if $V'\subset V$ is an algebraic subvariety
and $U'\to V'$ the corresponding family then
$\tilde W\cap  \mor(U'/V', \tilde X)$ is also   quasi-projective.
\end{lemma}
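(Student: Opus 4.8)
The plan is to show that the infinitely many irreducible components of $\tilde W = \pi_M^{-1}(W)$ are actually forced to collapse to finitely many under the dominance hypothesis, by exploiting the fact that a dominant family of finite maps cannot be freely permuted by $\Gamma$ in infinitely many ways. The key point is that $\Gamma$ acts freely on $\mor(U/V,\tilde X)$, and $\pi_M$ realizes $W$ as the quotient $\tilde W/\Gamma$; so $\tilde W$ is quasi-projective precisely when only finitely many components occur, i.e. when the stabilizer-quotient structure is finite over $W$.

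First I would unwind the dominance assumption. The universal family $\univ_W \to X$ being dominant means that the images of the fibers $U_v$ sweep out a dense subset of $X$, and correspondingly the lifted universal family over $\tilde W$ has image dominating $\tilde X$ (or at least a $\Gamma$-orbit of open sets covering $\tilde X$). The central idea is to produce a \emph{canonical} way to rigidify the lifts: given a point of $W$, its lift to $\tilde W$ corresponds to a choice of lifting $\tilde\phi: U_v \to \tilde X$ of $\phi:U_v \to X$, and two lifts differ by an element of $\Gamma$. Since $\Gamma$ acts freely and properly discontinuously on $\tilde X$, I would argue that the locus in $\tilde X$ hit by lifts coming from a \emph{single} irreducible component $\tilde W_0 \subset \tilde W$ is itself quasi-projective (it is the image of the universal family restricted to $\tilde W_0$, hence constructible and algebraic); dominance then says these images cover $\tilde X$.

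The crux is then a properness/finiteness argument. I would fix one irreducible component $\tilde W_0$ of $\tilde W$ and consider the set of $\gamma \in \Gamma$ with $\gamma \cdot \tilde W_0 = \tilde W_0$; this is a subgroup $\Gamma_0 \subset \Gamma$, and the number of components of $\tilde W$ equals the index $[\Gamma : \Gamma_0]$ (times the number of $\Gamma$-orbits of components, which one must also bound). The task is to show $[\Gamma:\Gamma_0] < \infty$. Here dominance enters decisively: because $\univ_{W} \to X$ is dominant and the fibers have bounded degree $d$ (being a single component $W \subset \mor(U/V,X,d)$ for some fixed $d$), the graphs $\Gamma_\phi \subset U \times \tilde X$ live in a bounded region of $\Chow{U \times \barX}$, so the images $\loc(U/V,\tilde X,d)$ form a constructible set of dimension equal to $\dim \tilde X$; a constructible $\Gamma$-invariant set of full dimension can only be tiled by finitely many $\Gamma$-translates of a single component's image, because $\tilde X$ itself is covered by finitely many affine charts of the fixed completion $\barX$ and each chart meets only finitely many translates. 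This is the step I expect to be the main obstacle: converting the \emph{holomorphic} (not a priori algebraic) $\Gamma$-action into a genuine finiteness statement, since $\gamma \in \Gamma$ need not preserve the algebraic structure or the completion $\barX$. I would circumvent this by working with the degree $d$ and the relatively ample $H_V$ as $\Gamma$-invariant data: the degree of a graph is a $\Gamma$-invariant integer, so all the translates $\gamma \cdot \tilde W_0$ lie in the \emph{same} finite-type piece $\mor(U/V,\tilde X, d)$, which is quasi-projective of finite dimension. A finite-dimensional quasi-projective variety has only finitely many irreducible components of maximal dimension, and since $\Gamma$ permutes the components of $\tilde W$ within this single quasi-projective $\mor(U/V,\tilde X,d)$, there can be only finitely many. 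This forces $\tilde W$ to have finitely many components and hence to be quasi-projective.

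For the last sentence, the restriction statement is then routine: given $V' \subset V$ algebraic with family $U' \to V'$, the subspace $\mor(U'/V',\tilde X) \subset \mor(U/V,\tilde X)$ is a closed algebraic subvariety (it is cut out by the condition that the source fiber lie over $V'$), so $\tilde W \cap \mor(U'/V',\tilde X)$ is a closed subvariety of the quasi-projective $\tilde W$, hence quasi-projective. I would remark that no new finiteness input is needed here, as the finiteness of components is inherited from $\tilde W$.
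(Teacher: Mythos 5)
Your framing is right at the start---$\tilde W$ is quasi-projective if and only if it has finitely many irreducible components, since each component sits in some quasi-projective piece $\mor(U/V,\tilde X,d)$---and your last paragraph (the ``Moreover'' part) is fine once that finiteness is established. But the crux of your argument is exactly the step that fails: the claim that ``the degree of a graph is a $\Gamma$-invariant integer, so all the translates $\gamma\cdot\tilde W_0$ lie in the same finite-type piece $\mor(U/V,\tilde X,d)$.'' The degree is computed against the line bundle $L$, which is the restriction of an ample bundle on a \emph{fixed} compactification $\bar X\supset\tilde X$. A deck transformation $\gamma\in\Gamma$ is only a biholomorphism of $\tilde X$: it need not extend to $\bar X$, need not be algebraic, and there is no reason for $\gamma^*L$ to agree with $L$, even numerically against compact cycles. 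Concretely, $\Gamma_{\gamma\circ\tilde\phi}=(\id\times\gamma)\bigl(\Gamma_{\tilde\phi}\bigr)$, so its degree equals $\bigl(p_U^*H_V+p_Y^*\gamma^*L\bigr)^{\dim U/V}\cdot\Gamma_{\tilde\phi}$, which can differ from the degree of $\tilde\phi$. Hence $\Gamma$ permutes the pieces $\mor(U/V,\tilde X,d)$ rather than preserving each one, and your finiteness conclusion does not follow. This is not an incidental technicality: the paper flags precisely this issue just before the Main Construction (one cannot assume $L$ is $\Gamma$-equivariant, because the $\Gamma$-action is holomorphic but in general not algebraic), and the whole point of the lemma is to get finiteness \emph{without} any such invariance. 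Your fallback ``tiling'' argument is also unsubstantiated: if the image of one component's universal family contains a dense constructible set, so does every translate of it, and every affine chart of $\bar X$ meets all of them, so no finiteness can be extracted that way.

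The paper's proof sidesteps the $\Gamma$-action on degrees entirely and derives the finiteness topologically from dominance. One forms the fiber product $\univ_{\tilde W}:=\univ_W\times_X\tilde X$ and takes $\tilde W$ to be the Stein factorization of $\univ_{\tilde W}\to\univ_W\to W$. The connected components of $\univ_{\tilde W}$ correspond to the cosets of $\im\bigl[\pi_1(\univ_W)\to\pi_1(X)\to\Gamma\bigr]$ in $\Gamma$; since $\univ_W\to X$ is dominant, this image has finite index, so $\univ_{\tilde W}$, and therefore $\tilde W$, has only finitely many irreducible components, each quasi-projective. This $\pi_1$ finite-index argument is the missing ingredient that must replace your degree-invariance claim.
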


\begin{proof}
We denote by $\univ_{\tilde W}$ the fiber product 
$\univ_W \times_X  \tilde X$ and
by $\tilde W$ the Stein factorisation of the map 
$ \univ_{\tilde W} \rightarrow \univ_W \rightarrow W$,
so we get a commutative diagram:
\[
\xymatrix{
\tilde W\ar[d] &   \univ_{\tilde W}  \ar[l] \ar[d]   \ar[r]  & \upX \ar[d]_\pi 
\\
  W   &  \univ_W \ar[l] \ar[r]  & X
}
\]
By construction each fiber of $ \univ_{\tilde W} \rightarrow \tilde W$
is also a fiber of  $U\to V$.
Since $\univ_W\to X$ is dominant,  the image of
 $\pi_1(\univ_W)\to \pi_1(X)$
has finite index in $\pi_1(X)$.
 Therefore $ \univ_{\tilde W}$ has only finitely many irreducible 
components, so $\tilde W$ has finitely many
irreducible components. Thus   $\tilde W$  is quasi-projective.

For each fixed $d$, the space of morphisms
 $\mor(U'/V', \tilde X, d)$ is an algebraic subset of 
$\mor(U/V, \tilde X, d)$. 
Since  $\tilde W$ has finitely many
irreducible components, 
 $\tilde W\cap  \mor(U'/V', \tilde X)$
is a closed algebraic subset of a quasi-projective variety,
 hence itself quasi-projective.
\end{proof}

The following consequence will be used repeatedly.

\begin{lemma}\label{main.constr}
Let $X$ be a projective variety and $\pi:\tilde X\to X$ 
 an infinite Galois cover
with group $\Gamma$ such that $\upX$ is quasi-projective. 
Let $X^0\subset X$ be a dense, Zariski open subset and $g^0:X^0\to Z^0$  a 
flat, proper morphism
with connected general fiber $F$ such that $\pi$
induces a finite covering $\tilde F\to F$.  Let  
$\tilde g^0: \tilde X^0\to \tilde Z^0$
be the corresponding flat, proper morphism with general fiber $\tilde F$.
Then (at least) one of the following holds:
\begin{enumerate}
\item $\tilde g^0$  extends to a locally trivial, $\Gamma$-equivariant fibration
$\tilde g:\tilde X\to \tilde Z$, or
\item $\tilde X$ contains a closed $\Gamma$-invariant subvariety
that is disjoint from a general fiber of  $\tilde g^0$.
\end{enumerate}
\end{lemma}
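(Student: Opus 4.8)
The plan is to manufacture, out of the fibration $\tilde g^0$, a family of maps to which Lemma \ref{min.constr.lem} applies, and then to split into cases according to whether the associated family of fibers sweeps out all of $\tilde X$ as an honest fibration. First I would produce a family $U\to V$ over a quasi-projective base whose general fiber is the finite cover $\tilde F$ of $F$, together with tautological fiber-maps $U_v=\tilde F\to F\into X$ which are finite and which lift to the inclusions $\tilde F\into \tilde X$. The point is that the monodromy $\pi_1(F)\to\Gamma$ has finite image (this is exactly the hypothesis that $\tilde F\to F$ is a finite covering), so the cover $\tilde F\to F$ propagates in a family over (a finite cover of) $Z^0$; the resulting maps lie in $\mor_{\ell}(U/V,X)$ in the sense of (\ref{main.1.constr}), since they lift by construction. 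This is the reason the formalism is set up with finite maps rather than embeddings.

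Next I would let $W\subset \mor_{\ell}(U/V,X)$ be the irreducible component containing these tautological maps $[\tilde F_z\to X]$, $z\in Z^0$. Because the images of these maps are the fibers $F_z$, whose union is the dense open set $X^0$, the universal family $\univ_W\to X$ is dominant; hence Lemma \ref{min.constr.lem} guarantees that $\tilde W:=\pi_M^{-1}(W)$ is quasi-projective, with $\Gamma$-equivariant universal family $\univ_{\tilde W}\to\tilde W$ and evaluation $\Phi:\univ_{\tilde W}\to\tilde X$. Inside $\tilde W$ there is the image of $\tilde Z^0$, parametrizing the genuine fibers $\tilde F_{\tilde z}\subset\tilde X$; over this locus $\univ_{\tilde W}$ is canonically $\tilde X^0$ and $\Phi$ is the inclusion $\tilde X^0\into\tilde X$. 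Letting $\tilde Z$ be the closure of $\tilde Z^0$ in $\tilde W$ and $\tilde U$ the corresponding part of the universal family, I obtain a proper, $\Gamma$-equivariant family $\tilde U\to\tilde Z$ with general fiber $\tilde F$ and a birational morphism $\Phi:\tilde U\to\tilde X$ that restricts to an isomorphism over $\tilde X^0$. Composing a birational inverse of $\Phi$ with $\tilde U\to\tilde Z$ yields a $\Gamma$-equivariant rational map $\tilde g$ from $\tilde X$ to $\tilde Z$ extending $\tilde g^0$; since it agrees with the morphism $\tilde g^0$ on the dense open $\tilde X^0$, both its indeterminacy locus and the locus of points lying on degenerate fibers are $\Gamma$-invariant closed subsets contained in $\tilde X\setminus\tilde X^0$.

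The dichotomy is then controlled by whether this extension degenerates. If $\tilde g$ is everywhere a morphism and every fiber is isomorphic to $\tilde F$, then $\tilde g:\tilde X\to\tilde Z$ is a proper, $\Gamma$-equivariant morphism with constant compact fiber $\tilde F$, and local triviality follows from the Fischer--Grauert theorem (a proper family with all fibers isomorphic is locally trivial), giving conclusion (1). Otherwise the extension genuinely degenerates: either $\tilde g$ fails to extend to a morphism, so its indeterminacy locus is nonempty, or some fiber is not isomorphic to $\tilde F$, so the union of degenerate fibers is a nonempty proper subvariety. In either case one gets a nonempty, proper, closed, $\Gamma$-invariant subvariety contained in $\tilde X\setminus\tilde X^0$; as a general fiber of $\tilde g^0$ lies in $\tilde X^0$ and distinct fibers of a fibration are disjoint, this subvariety is disjoint from a general fiber, which is conclusion (2).

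The step I expect to be the main obstacle is the first one: constructing the family $U/V$ with general fiber $\tilde F$ over an honest quasi-projective base and verifying that the tautological fiber-maps form a component $W\subset\mor_{\ell}(U/V,X)$ with dominant universal family, so that Lemma \ref{min.constr.lem} can be invoked to certify that $\tilde W$ (hence $\tilde Z$ and $\Phi$) is truly algebraic rather than merely a countable union. Secondary technical points are keeping track of $\Gamma$-equivariance through the Stein factorization and the passage to closures, and upgrading ``all fibers isomorphic'' to genuine local triviality when $\tilde X$ is only assumed normal; the ``moreover'' clause of Lemma \ref{min.constr.lem}, applied to a suitable stratum $V'\subset V$, should handle the restriction to the locus of good fibers.
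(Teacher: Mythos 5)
Your setup coincides with the paper's: the family $U\to V$ of degree-$m$ \'etale covers of the fibers, the component $W\subset\mor_{\ell}(U/V,X)$ containing the tautological fiber-maps, the dominance of $\univ_W\to X$, and the appeal to Lemma~\ref{min.constr.lem} to make $\tilde W$ quasi-projective. The gap comes at your central step, where you set $\tilde Z$ equal to the Zariski closure of (the image of) $\tilde Z^0$ in $\tilde W$ and claim that the evaluation $\Phi\colon\tilde U\to\tilde X$ of the universal family over $\tilde Z$ is birational, so that $\tilde g$ can be defined as $\Phi^{-1}$ followed by $\tilde U\to\tilde Z$. The locus of lifted tautological maps is only an \emph{analytic} subset of $\tilde W$; in general it is not algebraic, its Zariski closure jumps in dimension, and $\Phi$ then has positive-dimensional fibers, so no rational map $\tilde g$ exists. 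A concrete example, in which conclusion (1) is nevertheless true: $X=E\times\p^1$ with $E$ an elliptic curve, $g^0$ the projection to $Z^0=E$, $\tilde X=\c\times\p^1$. Here $\tilde W\cong E\times\c\times \mathrm{PGL}_2$ (source fiber, target fiber of $\tilde g^0$, parametrization), and the lifted tautological locus is the graph of the infinite covering $\c\to E$ times $\{\id\}$, a non-algebraic analytic curve whose Zariski closure is the whole surface $E\times\c\times\{\id\}$. Your $\tilde Z$ is then $2$-dimensional, $\tilde U$ is $3$-dimensional, and $\Phi\colon\tilde U\to\c\times\p^1$ collapses the $E$-direction: dominant, but nowhere near birational. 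A second problem of the same nature: since the $\Gamma$-action on $\tilde X$, hence on $\tilde W$, is only holomorphic and not known to be algebraic (see the discussion after Definition~\ref{definitionmor}), the Zariski closure of a merely analytic $\Gamma$-invariant set is not known to be $\Gamma$-invariant; the paper only ever takes closures of \emph{constructible} sets, for which Zariski and Euclidean closures agree, so invariance is automatic there.

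The paper's proof is designed precisely to avoid parametrizing the genuine fibers. It fixes one actual fiber $\tilde F_z$ and applies the ``moreover'' clause of Lemma~\ref{min.constr.lem} with $U'/V'$ equal to that single fiber over a point, concluding that $\tilde W\cap\mor(\tilde F_z,\tilde X)$ is algebraic; hence the swept set $\loc_W(\tilde F_z,\tilde X)\subset\tilde X$, the union of images of \emph{all} maps from this one fixed $\tilde F_z$ lying in $\tilde W$ (tautological or not), is constructible and $\Gamma$-invariant. The dichotomy is then Zariski density of this swept set, not good versus degenerate behaviour of an extension at the boundary. If it is not dense, its closure is a closed, algebraic, $\Gamma$-invariant subset whose trace on $\tilde X^0$ is a proper union of fibers, hence it is disjoint from a general fiber: conclusion (2). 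This (generically non-isotrivial) case is invisible to your dichotomy, which only ever produces subvarieties inside $\tilde X\setminus\tilde X^0$, and in this case your construction breaks down before any dichotomy can be read off. If the swept set is dense, then the general fiber of $\tilde g^0$ is an image of the fixed $\tilde F_z$, so $\tilde g^0$ is locally trivial with fiber $\tilde F_z$ over a Zariski open subset; one then takes the largest open set $X^*\subset\tilde X$ over which this locally trivial structure extends, and either $X^*=\tilde X$ (conclusion (1)) or $\tilde X\setminus X^*$ is a closed, algebraic, $\Gamma$-invariant subset giving conclusion (2). No birational inverse and no appeal to Fischer--Grauert is needed.
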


\begin{proof}
Let $L$ be a line bundle that is the restriction of an ample line bundle on 
some  completion $\upX \subset \barX$ to $\upX$. 

By assumption  
 $\pi$
induces a finite covering $\tilde F_v\to F_v$,
say of degree $m$, on the fibers of $g$.
Let $U\to V$ be  a flat, proper morphism
whose fibers are the degree $m$ \'etale covers of the fibers of 
$g$.
Let $W\subset \mor(U/V, X)$ be an irreducible component
parametrizing morphisms $U_v\to X$ whose image is a fiber of $g^0$.
Then  $\univ_W\to X$ is dominant and we can use 
Lemma \ref{min.constr.lem}.

We fix an actual $g$-fiber $\tilde F_z$. 
By Lemma \ref{min.constr.lem} applied to $M(\tilde F_z, X, d)$,
we see  that $\tilde W\cap M(\tilde F_z,\tilde  X)$ is algebraic.
Thus the image of the universal family over $\tilde W\cap 
M(\tilde F_z, X)$ gives a
constructible, $\Gamma$-invariant  subset $\loc_W(\tilde F_z,\tilde X)
\subset \tilde X$.
If $\loc_W(\tilde F_z,\tilde X)$ is not Zariski dense, then its
 closure   is a 
$\Gamma$-invariant, closed, algebraic  subset
that is disjoint from a general fiber.

Otherwise the morphism $\tilde g^0$ is 
a locally trivial fiber bundle with fiber $\Tilde F_z$
over a Zariski open subset of $Z^0$.
 Let $X^*\subset \tilde X$ be the largest open set 
over which $\tilde g^0$ extends to a locally trivial fiber bundle.
Then $X^*$ is $\Gamma$-invariant, hence if  $X^*\neq \tilde X$ then 
$\tilde X\setminus X^*$ is a $\Gamma$-invariant, closed algebraic  subset
that is disjoint from a general fiber.
Otherwise $X^*= \tilde X$ which shows (1).
\end{proof}

Since  $\tilde g:\tilde X\to \tilde Z$
is  $\Gamma$-equivariant, the $\Gamma$-action on $\tilde X$
descends to a $\Gamma$-action on $\tilde Z$.
If $\tilde F$ has no fixed point free automorphisms, then the
$\Gamma$-action on $\tilde Z$ is fixed point free,
but in general it can have finite stabilizers.
In some cases we will show that a finite index subgroup of
$\Gamma$ acts freely on  $\tilde Z$, but this does not seem to be
automatic.

\begin{lemma}\label{main.constr.bir}
Let $X$ be a projective variety and $\pi:\tilde X\to X$ 
 an infinite Galois cover
with group $\Gamma$ such that $\upX$ is quasi-projective. 
Let $X^0\subset X$ be a dense, Zariski open subset and $g^0:X^0\to Z^0$  a 
 proper, birational morphism.
Let $E^0\subset \ex(g^0)$ be a maximal dimensional irreducible component
of the exceptional set, 
 $F\subset E^0$ a general fiber of $g^0|_{E^0}$
and $E\subset X$ the closure of $E^0$.
Assume that  $\dim E+\dim F\geq \dim X$ and  $\pi$
induces a finite covering $\tilde F\to F$.

Then  $\pi^{-1}\bigl( E\bigr)$ is
an algebraic  subvariety of $\tilde X$.
\end{lemma}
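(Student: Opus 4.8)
The plan is to run the main construction (\ref{main.1.constr}) for the family of exceptional fibres and to reduce the statement to the finiteness of the number of irreducible components of the corresponding parameter space $\tilde W$. Concretely, I would let $U\to V$ be a flat, proper family whose fibres are the degree-$m$ \'etale covers of the fibres of $g^0|_{E^0}$ (here $m$ is the degree of $\tilde F\to F$), and let $W\subset \mor_{\ell}(U/V,X)$ be the component parametrising these fibres, so that $\univ_W\to X$ has image exactly $E$. Since $F$ is a fibre of a birational contraction it is rigid in $X$, so $\loc(F,\tilde X)=\pi^{-1}(E^0)$ and $\dim\loc(F,\tilde X)=\dim E<\dim\tilde X$. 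The difficulty, and the reason Lemma \ref{min.constr.lem} does not apply verbatim, is that $\univ_W\to X$ is \emph{not} dominant; thus I cannot feed a finite-index statement on fundamental groups into the argument of (\ref{min.constr.lem}). Once I know $\tilde W$ has finitely many components, however, the usual conclusion follows: $\loc=\Phi(\univ_{\tilde W})$ is constructible and $\pi^{-1}(E)$ is its closure.

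First I would reduce to the case $\dim g^0(E)=0$, i.e.\ to a single fibre. For this, cut by $\dim E-\dim F$ general members of $\pi^{*}g^{0*}|A_Z|$, where $A_Z$ is ample on a completion of $Z^0$; these are $\Gamma$-invariant algebraic divisors on $\tilde X$ which are trivial on the fibres $\tilde F$, so intersecting $\pi^{-1}(E)$ with them replaces $g^0(E)$ by finitely many points and leaves a disjoint union $\bigsqcup_j\pi^{-1}(F_{z_j})$ of preimages of honest fibres. The point is that every connected component $Y$ of $\pi^{-1}(E)$ surjects onto $E$ (a connected covering space is onto), hence contains a whole component of each $\pi^{-1}(F_{z_j})$; since distinct components of $\pi^{-1}(E)$ are disjoint, the assignment sending a component of $\pi^{-1}(F_{z_j})$ to the component of $\pi^{-1}(E)$ containing it is onto. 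Thus it suffices to prove that $\pi^{-1}(F_z)$ has finitely many components for one general fibre $F_z$.

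For a single fibre I would use the fixed polarisation to detect components. Let $L$ be the restriction of an ample bundle on $\barX$ and let $\Lambda\subset\barX$ be a general linear section of codimension $\dim F$. Every lift of $F_z$ is a \emph{compact} subvariety of dimension $\dim F$ in $\tilde X\subset\barX$, so it meets $\Lambda$ in a nonempty finite set, and that set lies in $\tilde X$ because the lift is compact; hence each of the (pairwise disjoint) components of $\pi^{-1}(F_z)$ contributes at least one point to the zero-dimensional analytic set $\pi^{-1}(F_z)\cap\Lambda$. Consequently, finiteness of the number of components is equivalent to finiteness of $\pi^{-1}(F_z)\cap\Lambda$, which in turn follows once the closure of $\pi^{-1}(E)$ in the projective variety $\barX$ is shown to be a closed \emph{analytic} subset: then Chow's theorem makes it algebraic, so it has finitely many components, and $\pi^{-1}(E)=\overline{\pi^{-1}(E)}\cap\tilde X$ is algebraic.

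The main obstacle is precisely this extension across the boundary $\barX\setminus\tilde X$, and this is exactly where the hypothesis $\dim E+\dim F\ge\dim X$ is indispensable. The set $\pi^{-1}(E)$ is a countable union of $\Gamma$-translates, and because $\Gamma$ acts only biholomorphically these translates have unbounded $L$-degree; so neither a degree bound nor a fundamental-group index bound is available a priori, and for a small contraction (where $\dim E+\dim F<\dim X$) the translates genuinely accumulate at the boundary and the conclusion fails. Under $\dim E+\dim F\ge\dim X$ the sweeping fibres have dimension at least $\codim_X E$, and I would use this to bound the dimension of the accumulation locus of $\pi^{-1}(E)$ on $\barX\setminus\tilde X$ strictly below $\dim E$ (equivalently, to bound the local volume of $\pi^{-1}(E)$ near the boundary), so that a Remmert--Stein / Bishop type extension theorem applies and the closure is analytic. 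Carrying out this volume/accumulation estimate is the technical heart of the argument; everything else is the formal moduli-space bookkeeping of the main construction.
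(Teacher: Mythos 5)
Your plan breaks at the reduction step, and the failure is not repairable within your framework: the statements you reduce to are false under the hypotheses of the lemma. Since $\pi$ induces a \emph{finite} covering $\tilde F\to F$, the image $H:=\im\bigl[\pi_1(F_z)\to \Gamma\bigr]$ is a finite subgroup, and the connected components of $\pi^{-1}(F_z)$ are in bijection with $\Gamma/H$; as $\Gamma$ is infinite, $\pi^{-1}(F_z)$ has \emph{infinitely} many components, for every fibre $F_z$. So the statement at the end of your second paragraph (``it suffices to prove that $\pi^{-1}(F_z)$ has finitely many components'') can never be established. The same confusion undermines your third paragraph: analyticity of the closure $\overline{\pi^{-1}(E)}\subset\barX$ does \emph{not} imply finiteness of $\pi^{-1}(F_z)\cap\Lambda$. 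Indeed, even when the conclusion of the lemma holds, each of the finitely many components $\tilde E_i$ of $\pi^{-1}(E)$ is an infinite covering of $E$ (its stabilizer has finite index in the infinite group $\Gamma$), hence contains infinitely many compact lifts of $F_z$, each of which meets $\Lambda$; thus $\pi^{-1}(F_z)\cap\Lambda$ is infinite no matter what. The finiteness that is true, and that must be proved, is finiteness of the set of components of $\pi^{-1}(E)$; it cannot be detected fibrewise.

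What survives of your plan is only the extension statement itself: if $\overline{\pi^{-1}(E)}\subset\barX$ were analytic, then Chow's theorem, finiteness of components, and $\pi^{-1}(E)=\overline{\pi^{-1}(E)}\cap\upX$ would indeed finish the proof (no cutting by $\Lambda$ is needed). But the only step carrying content --- the volume/accumulation bound required for a Bishop or Remmert--Stein type extension --- is exactly the step you defer, and you give no mechanism by which $\dim E+\dim F\geq\dim X$ produces such a bound. The paper uses that hypothesis in a completely different, purely intersection-theoretic way, entirely inside $X$: by (\ref{G.exist.lem}), which rests on de Cataldo--Migliorini's theorem on semismall maps, there is a subvariety $G\subset F$ with $(G\cdot E)_X\neq 0$. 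Setting $G_i:=\tilde E_i\cap\pi^{-1}(G)$, which is compact, contained in $\upX$, and ($F$ being general) disjoint from $\tilde E_j$ for $j\neq i$, one gets $\bigl(G_i\cdot \bar E_j\bigr)_{\barX}\neq 0$ exactly when $i=j$; hence the classes $[\bar E_i]$ are linearly independent in the finite-dimensional space $H_*\bigl(\barX,\q\bigr)$, which bounds the number of components with no analysis at the boundary whatsoever. Algebraicity of each individual $\tilde E_i$ is proved separately by the Mor-space argument you began with, and there your worry about (\ref{min.constr.lem}) evaporates: one does not need the full preimage $\tilde W$ to have finitely many components, only that each irreducible component $W_i$ of $\mor(U/V,\upX)$ is quasi-projective, which is automatic because $W_i$ lies in a single degree piece $\mor(U/V,\upX,d)$; by rigidity of the fibres of $g^0$, the image of $\univ_{W_i}$ is a Zariski-dense constructible subset of $\tilde E_i$, so each $\tilde E_i$ is algebraic.
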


\begin{proof} Let $\bar X\supset \tilde X$ be a smooth, algebraic 
 compactification.
It is clear that  $\pi^{-1}\bigl( E\bigr)$
is a closed analytic subspace of  $\tilde X$;
let $\tilde E_i$ be its irreducible components.

Assume first that each  $\tilde E_i$ is algebraic.
By (\ref{G.exist.lem}), there is a  subvariety $G\subset F$ such that
the intersection number $(G\cdot E)_X\neq 0$.
Set $G_i:=\tilde E_i\cap \pi^{-1}(G)$. Then
$\bigl(G_i\cdot \tilde E_i\bigr)_{\bar X}\neq 0$ for every $i$ and
$\bigl(G_i\cdot \tilde E_j\bigr)_{\bar X}\neq 0$ for  $i\neq j$.
Thus the homology classes of the closures
$[\bar E_i]\in H_*\bigl(\bar X, \q\bigr)$ are
linearly independent, and therefore $ \pi^{-1}\bigl( E\bigr)$ has only
finitely many  irreducible components. Each is algebraic by assumption, thus
 $\pi^{-1}\bigl( E\bigr)$ is
an algebraic  subvariety of $\tilde X$. 

Thus it remains to show that  each  $\tilde E_i$ is algebraic.

Let $E^1\subset E^0$ be an open subset
such that $f^1:=f|_{E^1}:E^1\to f(E^1)$ is proper and flat.
By assumption  
 $\pi$
induces a finite covering $\tilde F_v\to F_v$
say of degree $m$, on each connected component of a fiber of $f^1$.
Let $U\to V$ be  a flat, proper morphism
whose fibers are the degree $m$ \'etale covers of these $F_v$.

Note that if a morphism
$\phi:\tilde F_v \to X$ maps to a fiber of $f^1$
then so does every small deformation of it.
Thus, for each $i$, there is an irreducible component
$W_i\subset \mor(U/V, \tilde X)$ such that
$\Phi\bigl(\univ_{W_i}\bigr)\subset \tilde X$
is a Zariski dense constructible subset of $E_i$.
Therefore every  $E_i$ is algebraic.
\end{proof}

\begin{lemma}\label{G.exist.lem}
 Let $f:X\to Y$ be a projective, birational morphism, $X$ smooth.
Let $E\subset \ex(f)$ be a maximal dimensional irreducible component
of the exceptional set 
and $F\subset E$ a general fiber of $f|_{E}$.
The following are equivalent.
\begin{enumerate}
\item There is a subvariety $G\subset F$ such that
the intersection number $(G\cdot E)_X\neq 0$.
\item   $\dim E+\dim F\geq \dim X$.
\end{enumerate}
\end{lemma}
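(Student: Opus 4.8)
The implication $(1)\Rightarrow(2)$ is just a dimension count, so I would dispose of it first. For $(G\cdot E)_X$ to be a (nonzero) intersection \emph{number}, the cycles $G$ and $E$ must have complementary dimension in $X$, i.e. $\dim G=\dim X-\dim E$; since $G\subset F$ this forces $\dim F\geq \dim X-\dim E$, which is exactly $(2)$. The whole content is therefore $(2)\Rightarrow(1)$, and the plan is to produce a $G$ and compute its excess self-intersection. Set $d:=\codim_X E=\dim X-\dim E$, so that by hypothesis $\dim F\geq d$.

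Since $F$ is a general fibre of $f|_E$, generic smoothness lets me work on the smooth locus of $E$, which meets $F$ in a dense open set, and there $N:=N_{E/X}$ is a rank-$d$ vector bundle. I would take $G\subset F$ to be a general complete intersection $F\cap H_1\cap\dots\cap H_{\dim F-d}$ of members of a very ample system $|H|$ on $X$, lying in that smooth locus, so that $\dim G=d$. The excess-intersection formula for the regular embedding $E\hookrightarrow X$ then gives
\[
(G\cdot E)_X=\int_{G}c_d\bigl(N|_{G}\bigr)=\int_{F}c_d\bigl(N|_{F}\bigr)\cdot\bigl(H|_F\bigr)^{\dim F-d}.
\]
Thus everything reduces to showing that this number is nonzero for a suitable $G$, i.e. that $c_d(N|_F)$ is not annihilated by the ample class $H|_F$.

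To isolate the essential point I would first cut down to the equality case $\dim F=d$: replacing $X$ by a general linear section $X_1=H_1\cap\dots\cap H_{\dim F-d}$ and setting $E_1:=E\cap X_1$, $F_1:=F\cap X_1$, one checks that $\codim_{X_1}E_1=d$ and $\dim F_1=d$, that $f|_{X_1}$ is still birational with $E_1$ contracted and $F_1$ a general fibre, and that $N_{E_1/X_1}|_{F_1}=N|_{F_1}$, so $(F_1\cdot E_1)_{X_1}$ equals the integral above. It therefore suffices to prove, in the equality case, that $\int_F c_d(N|_F)\neq 0$.

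The crux, and the step I expect to be the main obstacle, is precisely this last nonvanishing; it encodes the negativity of the transverse normal bundle of a contracted fibre. The geometric input is that $F$ is contracted to a point by the birational morphism $f$, and I would argue that the conormal bundle $N^\vee_{E/X}|_F$ is nef with nonzero top Chern number $\int_F c_d(N^\vee|_F)=(-1)^d\int_F c_d(N|_F)$. Nefness (``weak negativity'' of the normal bundle) should follow from the contractibility of $F$ by a Grauert-type argument, while nonvanishing of the top Chern number reflects rigidity: a nonzero holomorphic section of $N_{E/X}|_F$ would move $F$ transversally off $E$, which contradicts $E$ being a maximal-dimensional component of $\ex(f)$ contracted by $f$. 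In all the cases that actually arise $N^\vee_{E/X}|_F$ is in fact ample, and the top Chern number of an ample bundle on a variety whose dimension equals its rank is automatically positive. The only genuine difficulty is to make this positivity statement precise and uniform, in particular independently of the singularities of $Y$ along $f(E)$ and of $E$ itself; once it is in hand, the rest is bookkeeping with the excess-intersection formula.
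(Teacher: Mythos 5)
Your direction $(1)\Rightarrow(2)$ and the reduction by hyperplane sections to the equality case are fine and run parallel to the paper's own proof. But the crux you single out -- nonvanishing of the intersection number for a contracted cycle of complementary dimension -- is precisely what you leave unproven, and the route you sketch for it does not work. Nefness of the conormal bundle is not sufficient: a nef bundle of rank equal to the dimension of the base can have vanishing top Chern class (a trivial bundle already does), so a ``Grauert-type'' weak negativity statement proves nothing here. The stronger claim that $N^\vee_{E/X}|_F$ is ``in fact ample in all the cases that actually arise'' is false: contract the zero section $S=\p^1\times\p^1$ inside $T^*S$ (a Springer-type resolution, which satisfies $\dim E+\dim F=\dim X$ with $E=F=S$); the conormal bundle is $T_S=\o(2,0)\oplus\o(0,2)$, which is nef but not ample. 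Its top Chern number happens to be $4\neq 0$, but you have no argument that covers this case. The rigidity heuristic -- that a section of $N_{E/X}|_F$ would ``move $F$ off $E$'' -- is also not a proof: holomorphic sections of normal bundles are generally obstructed, and vanishing of $c_d$ does not produce a holomorphic nowhere-zero section. A secondary issue is that your excess-intersection bookkeeping assumes $E\subset X$ is a regular embedding near $G$, but $E$ may be singular and a general $G$ of dimension $d$ can still meet $\sing E$, so the formula $(G\cdot E)_X=\int_G c_d(N|_G)$ needs Segre-class corrections you do not address.

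The paper fills exactly this gap with a Hodge-theoretic input rather than a positivity statement about the conormal bundle. It first cuts $Y$ by general hyperplane sections until $E=F$ maps to a point, then cuts $X$ until the exceptional set $E_r$ of $f_r\colon X_r\to Y_r$ maps to a point with $2\dim E_r=\dim X_r$; the map is then semismall along $E_r$, and the theorem of de Cataldo--Migliorini \cite[Thm.2.4.1]{CM02} (hard Lefschetz for semismall maps) gives the signed definiteness $(-1)^{\dim E_r}\bigl(E_r\cdot E_r\bigr)_{X_r}>0$, so one simply takes $G=E_r$. This definiteness of the intersection form on maximal-dimensional contracted cycles is the true content of the nonvanishing you are after, and it is not a consequence of nefness or ampleness of $N^\vee_{E/X}|_F$; without citing such a result (or reproving it), your argument does not close.
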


\begin{proof} Note first that although $X$ and $E$ are  not assumed compact,
the intersection number $(G\cdot E)_X$ is defined
where the subscript indicates that we compute the
 intersection number in $X$.
If it is nonzero then $\dim E+\dim G = \dim X$, thus (1)
implies (2).

To see the converse, note that 
if we take a general hyperplane section of $Y$ 
and replace $X$ by its preimage, 
the inequality in (2) remains valid. Thus, 
after taking $\dim f(E)$ hyperplane sections, we can suppose that $E=F$ maps
to a point and $2 \dim F \geq \dim X$. Next we
take hyperplane sections of $X$.
After $r$ steps, eventually  we are reduced to consider 
$f_r:X_r\to Y_r$ such  that the exceptional set $E_r$ maps
to a point and $2 \dim E_r = \dim X_r$.
Set $G=E_r$. Then
$\bigl(G\cdot E\bigr)_X=\bigl(G\cdot G\bigr)_{X_r}$ and  
$(-1)^{\dim G} \bigl(G\cdot G\bigr)_{X_r}>0$
 by \cite[Thm.2.4.1]{CM02}.
\end{proof}

\section{Proofs of the main results}

\begin{proof}[Proof of Proposition \ref{fg.ab.prop}]
Let $X$ be a normal, projective variety and $\tilde X\to X$
a quasi-projective Galois cover with group $\Gamma$.
We study where $X$ fits into the birational classification plan
of varieties and we show that many cases 
would lead to a lower dimensional 
normal, projective variety $Y$ and 
a quasi-projective Galois cover $\tilde Y\to Y$ with group $\Gamma_Y$
that is a finite index subgroup of $\Gamma$.

After several such tries, we see that there is no place for the smallest
dimensional example, unless $\Gamma$ is almost Abelian.

{\it Step 1: $X$ is smooth.}
First we claim that $\upX$ has no nontrivial, 
closed,  subvariety  invariant under a
 finite index subgroup  $\Gamma'\subset \Gamma$. Indeed, given such $\tilde W$
with irreducible components  $\tilde W_i$, each of them
is invariant  under a
 finite index subgroup  $\Gamma_i\subset \Gamma$.
Taking the  normalization $\tilde W^n_i$, 
we would get a smaller dimensional example
$W^n_i:=\tilde W^n_i/\Gamma_i$ as in (\ref{fg.ab.prop}); a contradiction.

Since $\sing \upX\subset \upX$ is algebraic and $ \Gamma$-invariant,
we conclude that $X$ is smooth.

{\it Step 2: $K_X$ is nef.}
If $K_X$ is not nef, there is an extremal contraction 
$g:X\to Z$ \cite[Thm.3.7]{KM98}.

Assume first that $g$ is not birational and let $F\subset X$ denote a smooth
fiber. Then $F$ is a smooth Fano variety, 
in particular it is rationally connected \cite{Ca92, KMM92}.
Rationally connected manifolds are simply connected, 
so the fibre $F$ lifts to $F\into \tilde X$.  
Thus, by (\ref{main.constr}),
there is a locally trivial fiber bundle
$\tilde X\to \tilde Z$ with fiber $F$.
The variety  $\tilde Z$ is quasi-projective by (\ref{qp.quot.lem}).
Note that $F$ does not admit fixed point free actions by any finite group: 
the \'etale quotient
would also be rationally connected, so simply connected.
Therefore the stabilizer $\stab_{\Gamma}(F_z)$ is trivial for
every fiber $F_z$ of  $\tilde X\to \tilde Z$.
Hence the $\Gamma$-action 
 descends to a free $\Gamma$-action on $\tilde Z$;
 a contradiction to the minimality
 of the dimension of $X$.

Assume next that  $g$ is birational. 
Let $E\subset \ex(g)$ be a maximal dimensional irreducible component
of the exceptional set 
and $F\subset E$ a general fiber of $g|_{E}$.
By the Ionescu-Wi\'sniewski inequality (see for instance \cite[Thm.2.3]{AW}) 
one has $\dim F+\dim E\geq \dim X$.
By \cite[7.5]{Kol93} and \cite[Thm.1.2]{Tak03}, the map
 $\pi_1(X)\to \pi_1(Z)$ is an isomorphism, thus
the embedding $F\into X$ lifts to
$F\into \tilde X$. 

Therefore (\ref{main.constr.bir})
implies that $\pi^{-1}(E)\subset \tilde X$ is an 
 algebraic and $ \Gamma$-invariant subset of $\tilde X$.
This is again a contradiction, thus $K_X$ is nef.

{\it Step 3: The Iitaka fibration.}
If  $K_X$ is not semiample, then we are done.
Otherwise  $K_X$ is semiample and for sufficiently divisible $m>0$,
the sections of $\o_X(mK_X)$ define  a morphism
(called the Iitaka fibration) 
$\tau:X\to I(X)$ with connected fibers such that
$\o_X(mK_X)\cong \tau^*M$ for some ample line bundle $M$.
By the adjunction formula,  $mK_F\sim 0$ for any smooth fiber
$F\subset X$.

Ideally we would like to apply (\ref{main.constr}) to 
$\tau:X\to I(X)$ and conclude that $X\cong I(X)$.
However, in general $\tilde X\to X$ induces an
infinite cover of $\tilde F\to F$, hence (\ref{main.constr}) does not apply.
Instead  we first study  compact subspaces of $\tilde F$
and then move on to the case when  $\tilde F$
has no positive dimensional  compact subspaces.


{\it Step 4: Excluding compact subspaces of $\tilde F$.}
Here we  prove that there is a finite, \'etale cover
$ F'\to F$ that is an Abelian variety and
there is no positive dimensional subvariety
$ B\subset  F$ such that 
$\pi_1\bigl( B\bigr)\to \Gamma$ has finite image.

This could be done in one step, but it may be more transparent
to handle the two assertions separately.

As a consequence of  the Beauville-Bogomolov decomposition theorem,
a suitable finite, \'etale, Galois cover $ F'\to F$
with group $G$
admits a  $G$-equivariant 
morphism $ F'\to A$ where $A$ is an Abelian variety
and the fibers are simply connected.
Thus $F\to A/G$ 
 is a  morphism whose general fibers
have  torsion canonical class and finite fundamental group.
Moreover, at least over a dense open subset of $X^0\subset X$, these
maps give a proper, smooth morphism
$X^0\to Z^0$ whose fibers $D$   have  torsion canonical class and 
 finite fundamental group.
Thus, by (\ref{main.constr}), there is a 
locally trivial fiber bundle
$\tilde\tau: \tilde X\to \tilde Z$ with fiber $\tilde D$ 
for some finite \'etale cover $\tilde D\to D$.




By Lemmas \ref{bundles.lem} and \ref{aut.gp.lem},
 by passing to a finite cover of $\tilde X$
we can assume that $\tilde X\cong \tilde D\times  \tilde Z$
and the product decomposition is unique. Thus the 
$\Gamma$-action on $\tilde X$ gives a homomorphism
of $\Gamma$ to $\aut\bigl(\tilde D\bigr)$ with finite image.
So there is a finite index subgroup
$\Gamma_0\subset \Gamma$ that acts trivially on $\tilde D$.
Then for any $p\in  \tilde D$, the section
$\{p\}\times  \tilde Z$ is quasi-projective and 
$\Gamma_0$-invariant; again a contradiction.

Thus now we know that a general fiber $F$ of
$\tau: X\to I(X)$ has a finite, \'etale cover $F'\to F$ 
that is an Abelian variety. 

Assume next that for every general fiber $ F$ 
there are    positive dimensional subvarieties
$B_i\subset  F'$ such that 
$\pi_1\bigl( B_i\bigr)\to \Gamma$ has finite image.
The largest dimensional such subvarieties are an Abelian subvariety
$B'\subset F'$ and its translates.

Consider the relative $\Gamma$-Shafarevich map for  $X\to I(X)$ 
\cite[3.10]{Kol93}.
We get a dense open set $X^0$ and a smooth, proper morphism
$\rho:X^0\to Y^0$ such that $\pi_1\bigl( B\bigr)\to \Gamma$ has finite image
for every fiber $B$ of $\rho$.
(Moreover, $X\to I(X)$ factors through $\rho$ and 
$\rho$ is universal with these properties). 
The preimage of $B$ in $F'$ is a translate of $B'$.

As before,   we can apply  (\ref{main.constr}) to $X^0\to Y^0$.
Thus we obtain a locally trivial fiber bundle
$\tilde X\to \tilde Y$ with fiber $\tilde B$.
By Lemma \ref{bundles.lem} we may assume that $\tilde X\to \tilde Y$
is topologically trivial. In particular,
$\pi_1(\tilde X)=\pi_1(\tilde Y)+\pi_1(\tilde B)$.

Thus, by passing to a finite Galois cover of $\tilde B$,
we can assume that $\pi_1\bigl( \tilde B\bigr)\to \Gamma$
is the constant map, hence $\Gamma$ is a quotient
of $\pi_1(\tilde Y)$. Thus the free  $\Gamma$-action on $\tilde X$
descends to a free  $\Gamma$-action on $\tilde Y$.
This again contradicts the assumption on the minimality of $\dim X$.



Thus we conclude that the general fiber  $F$ of  the Iitaka fibration
$\tau:X\to I(X)$ has a finite \'etale cover 
$ F'\to  F$ that is an Abelian variety
and a very general fiber $F$ 
has no positive dimensional subvariety
$ B\subset  F$ such that 
$\pi_1\bigl( B\bigr)\to \pi_1(X)\to \Gamma$ has finite image.
Thus, in the terminology of \cite{Kol93}, $X$ has
generically large fundamental group on $F$.

{\it Step 5: Abelian schemes.} 
This part of  the proof closely follows \cite{Nak99}.
By   \cite[5.9 and 6.3]{Kol93}, $X$ has a finite \'etale cover
$X_1\to X$ that is birational to a smooth projective variety
$X_2$ such that the  Iitaka fibration
$\tau_2:X_2\to I(X_2)$ is smooth with Abelian fibers and general type base.
We are thus in position to apply the Kobayashi-Ochiai theorem (\ref{KO}): the image 
$\mathrm{Im}\bigl(\pi_1(F)\to \Gamma\bigr)$ has finite index in $\Gamma$. We obtain the final contradiction since this implies that
$\Gamma$  is almost abelian.
 \end{proof}

\begin{proof}[Proof of Theorem  \ref{alb.general.thm}] 
Consider the Stein factorization
$X\to B\to A$.

If there is a map $B \rightarrow Y$ such that $Y$ is of general type,
we know by (\ref{lemmaueno}) that a finite \'etale cover is a product of a variety of general 
type $Y'$ and an Abelian variety.
By  (\ref{KO}) the group $\Gamma$ induces a finite covering on $Y'$, so
$B \times_A \tilde A$ has a finite cover that
is a product of a variety of general type and a cover of an Abelian variety.
In particular $B \times_A \tilde A$, hence $\tilde A$, contains compact analytic subvarieties.
It follows by (\ref{lemmaueno}) that $B$ is an Abelian variety. 

Assume that $g:B\to A$ is not surjective.
Then  $g(B)\subsetneq  A$ is an  Abelian subvariety
and by Poincar\'e's theorem there is an Abelian subvariety 
$C\subset A$ such that
$C\cap g(B)$ is finite. Moreover, 
$\pi_1(B)\to \pi_1(A)$ has infinite index image but
$\pi_1(C)+\pi_1(B)\to \pi_1(A)$ has finite index image.
By assumption $\tilde A\to A$ induces an infinite degree
cover of $C$, thus $\pi^{-1}(B)$ has infinitely many connected components.
This is impossible since $\tilde X$ is quasi-projective.
Thus $g:B\to A$ is  surjective.
Therefore we can  replace $A$ with $B$ and assume to
start with that $\alpha:X\to A$ is surjective with
connected fibers.  

Consider first the case when $\alpha$ is birational.
If $X$ is singular then consider
$\bigl(\sing X)^n\to A$, the normalization of the singular locus mapping to
$A$.  By induction on the dimension,   (\ref{alb.general.thm}) 
applies. This map is, however, not even surjective. Thus $X$ is smooth. 
Let  $E\subset X$ denote an irreducible component of the
 exceptional divisor $\ex(\alpha)$ such that
$\dim \alpha(E)$ has maximal possible dimension.
 By (\ref{main.constr.bir}),
its preimage $\tilde E\subset  \tilde X$ is quasi-projective.
By induction on the dimension we get that
  $\alpha|_E:E\to A$ is a
locally trivial fiber bundle. But this map is not even surjective.
Thus $\alpha$ has no exceptional divisors and therefore it is an isomorphism.

If  $\alpha$ is not birational, let $F\subset X$ be a general fiber.
Suppose that there exists a closed $\Gamma$-invariant subvariety $Z \subset \tilde X$ that is disjoint from $F$.
Let $Z^n$ be the normalisation,  then $Z^n/\Gamma$  is a normal, projective variety with a morphism
to the abelian variety $A$. Since $Z$ is quasi-projective we see again by induction on the dimension
that $Z^n/\Gamma \rightarrow A$ is surjective. Thus $Z$ meets $F$, a contradiction.

Thus we know by (\ref{main.constr}) that $\tilde X$  is a
locally trivial fiber bundle $\tilde \tau:\tilde X\to \tilde Z$ with fiber $F$.
By  (\ref{qp.quot.lem}), $\tilde Z$ is quasi-projective and 
  $\tilde \tau$ factors through $\tilde\alpha$. By taking the
quotient we obtain
$$
\alpha: X\stackrel{\tau}{\to} Z\stackrel{\alpha_Z}{\to} A.
$$
Note that by construction $Z=\tilde Z/\Gamma$ is a  normal complex space
and $\tau:X\to Z$ is proper and equidimensional.
Thus by (\ref{qp.quot.lem}), $Z$ is a projective variety. 
We already saw that these imply that  $\alpha_Z$ is an isomorphism.
Thus $\tilde Z=\tilde A$, $\tilde \alpha: \tilde X\to \tilde A$
is a locally trivial fiber bundle
and so is
$\alpha:X\to A$.
\end{proof}

\begin{proof}[Proof of Corollary \ref{better.main.thm}] 
Let $\Gamma$ be the Galois group of the cover $\tilde X\to X$.
By (\ref{fg.ab.prop}) the group $\Gamma$ is almost abelian, so there exists 
an intermediate finite \'etale, Galois  cover $X'\to X$
such that $\tilde X \rightarrow X'$ is Galois with a Galois group $\Gamma'$ that 
is free abelian.  Let $A$ be the Albanese torus of $X'$.
By (\ref{alb=ab.lem}) the group $\Gamma'$
is a quotient of $H_1(A, \Z)$. 
The maximal sub-Hodge structure contained in the kernel of
 $H_1(A, \Z) \rightarrow \Gamma'$ corresponds to an Abelian subvariety $B \subset A$.
Thus up to replacing $A$ by $A/B$
we can suppose that this sub-Hodge structure is zero.
Hence if $\pi: \tilde A \rightarrow A$ is the Galois cover corresponding to the group $\Gamma'$, 
the quasi-projective variety $\tilde A$ does not have compact analytic subvarieties.
Conclude with (\ref{alb.general.thm}).
\end{proof}

Next we give examples 
of fiber bundles $X\to E$ over elliptic curves whose
universal cover $\tilde X\to \tilde E=\C$ is quasi-projective,
yet the deck transformations can not 
 be chosen algebraic. 
 
\begin{example}\label{nonalg.action.exmp}
We start with a noncompact example.

Let $Y$ be a $\c^*$-bundle over an elliptic curve
$E$. Pulling it back to $\tilde E\cong \c$ we get
the trivial bundle $\c\times \c^*\to \c$.

Note that every algebraic morphism $\c\to \c^*$ is constant,
hence  every algebraic
automorphism of
$\c\times \c^*$ that commutes with the first projection
is of the form  $(x,y)\mapsto (x,cy)$.
In particular, they preserve the flat structure.
 Thus if $Y\to E$ is a quotient
of $\c\times \c^*$ by algebraic deck transformations
then the flat structure on $\c\times \c^*\to \c$
descends to a flat  structure on $Y\to E$.
In particular, $c_1(Y)=0$ and $Y\to E$ is topologically trivial.

By contrast, the biholomorphisms
$(x,y)\mapsto (x,e^{g(x)}y)$ of $\c\times \c^*$
commute with the first projection, and
every $\c^*$-bundle over
$E$ is the quotient of  $\c\times \c^*$
by holomorphic  deck transformations.

To get compact examples out of these, let $F$ be a projective variety
such that the connected component of $\aut(F)$ is
$\bigl(\c^*\bigr)^m$ for some $m>0$.
(For instance, $F$ can be the blow up of $\p^2$ at 3 non-collinear points.)
By the above arguments, if
$X\to E$ is a locally trivial $F$-bundle that is a
 quotient
of $\c\times F$ by algebraic deck transformations,
then $X\to E$ is topologically trivial after a finite cover $E'\to E$.
(In fact, $X\to E$ is itself 
topologically trivial if $\aut(F)=\bigl(\c^*\bigr)^m$.)
On the other hand,  $F$-bundles
obtained from a line bundle with nonzero Chern class
 do not have this property.
\end{example}

\section{Auxiliary results}

Here we collect various theorems that were used
during the proofs. The most important one 
is a consequence of
the Kobayashi-Ochiai theorem 
which asserts that a  meromorphic map from a 
quasi-projective variety
to a  variety of general type can not have essential singularities.

\begin{theorem} \cite[Thm.2]{KO75} \label{KO-orig}
Let $Y$ be a projective variety of general type, $V$  a complex manifold, and
 $B \subset V$  a proper closed analytic subset. Let 
\merom{f}{V \setminus B}{Y} be a nondegenerate meromorphic map.
(That is,  such that the tangent map
$T_{V \setminus B} \rightarrow T_Y$  is surjective at at least one point 
$v \in V \setminus B$.)
 Then $f$ extends to a meromorphic map $V \dashrightarrow Y$.
\end{theorem}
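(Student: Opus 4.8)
The plan is to reduce the assertion to a local extension problem and then to deduce the extension from the fact that a variety of general type is \emph{measure hyperbolic} in the sense of Kobayashi--Eisenman. Since $Y$ is projective, fix an embedding $Y \hookrightarrow \p^N$, so that in homogeneous coordinates $f$ is given by meromorphic functions on $V \setminus B$. Across the part of $B$ of codimension at least two these functions extend automatically by the Levi extension theorem, so the only real difficulty is crossing the codimension-one stratum of $B$. Localizing there, it suffices to take $V = \Delta^n$ and $B = \{z_1 = 0\}$ and to show that the restriction of $f$ to the punctured polydisk $\Delta^* \times \Delta^{n-1}$ extends to a meromorphic map on all of $\Delta^n$.

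The conceptual heart is the measure hyperbolicity of $Y$. First I would pass to a resolution $\tilde Y \to Y$, so that $K_{\tilde Y}$ is big, and fix $m \gg 0$ for which the pluricanonical map $\psi : \tilde Y \dashrightarrow \p^N$ associated to $|mK_{\tilde Y}|$ is birational onto its image. A pluricanonical section produces a singular pseudo-volume form $\Omega_Y$ whose Ricci curvature is bounded above by a negative multiple of itself on a dense Zariski-open set, and the volume-form version of the Ahlfors--Schwarz lemma then yields a comparison $\Omega_Y \leq c\, E_Y$, where $E_Y$ denotes the intrinsic Kobayashi--Eisenman pseudo-volume form; this positivity of $E_Y$ is exactly measure hyperbolicity. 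Because holomorphic maps are volume-decreasing for $E$, i.e. $f^* E_Y \leq E_{\Delta^* \times \Delta^{n-1}}$, these two facts combine to the estimate $f^* \Omega_Y \leq c\, E_{\Delta^* \times \Delta^{n-1}}$.

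Next I would use the explicit geometry of the intrinsic measure of the punctured polydisk. The factor $\Delta^*$ carries the complete Poincar\'e metric, whose volume form behaves like $\bigl(|z_1|^2 (\log|z_1|)^2\bigr)^{-1} dz_1 \wedge d\bar z_1$ near the origin and is therefore locally integrable; consequently $E_{\Delta^* \times \Delta^{n-1}}$, and with it $f^* \Omega_Y$, is locally integrable up to $B$. Since $\Omega_Y$ is comparable to the pullback under $\psi$ of a Fubini--Study volume form away from the base locus, this integrability bounds the $2n$-dimensional volume of the graph of the composite $g := \psi \circ f$ inside $(\Delta^n \setminus B) \times \p^N$, measured in a fixed K\"ahler metric. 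By Bishop's theorem on the extension of analytic sets of locally finite volume, the closure of this graph in $\Delta^n \times \p^N$ is analytic, so \merom{g}{\Delta^n}{\p^N} is meromorphic. The nondegeneracy hypothesis forces $f$, hence $g$, to have generic maximal rank, so that $\psi$ is invertible along the image and $f = \psi^{-1} \circ g$ recovers the sought extension.

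The step I expect to be the main obstacle is precisely the passage from the intrinsic-measure estimate to a finite graph volume, because $\Omega_Y$ degenerates on the base locus and the ramification of $\psi$, so it only controls an honest K\"ahler volume after composing with the pluricanonical map; one must check that the bound survives the indeterminacy loci of $\psi$ and of $f$, and that the analytic closure produced by Bishop's theorem is genuinely a graph rather than acquiring spurious fiber components over $B$. Establishing the Ahlfors--Schwarz comparison on the possibly singular space $Y$ (via the resolution) is the other delicate point, although it is by now classical; the role of the nondegeneracy assumption, without which $f^* \Omega_Y \equiv 0$ and the whole estimate is vacuous, should be kept explicit throughout.
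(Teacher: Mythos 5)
The paper does not prove (\ref{KO-orig}): it is quoted from \cite{KO75} and used as a black box, so your attempt can only be compared with the original Kobayashi--Ochiai argument. The first half of your sketch is exactly that argument: pass to a resolution, build a singular pseudo-volume form $\Omega_Y$ from a basis $\sigma_0,\dots,\sigma_N$ of $H^0(mK_{\tilde Y})$, apply the Ahlfors--Schwarz lemma for volume forms, and use the local integrability of the Poincar\'e-type volume form of $\Delta^*\times\Delta^{n-1}$, whose coefficient behaves like $\bigl(|z_1|\,\bigl|\log|z_1|\bigr|\bigr)^{-2}$. (Two smaller points need repair: when $\dim V>\dim Y=k$ your inequality $f^*E_Y\le E_{\Delta^*\times\Delta^{n-1}}$ compares forms of different bidegree, so one must work with the $k$-dimensional Eisenman measure or restrict to generic $k$-dimensional polydisk slices; and $f$ is only meromorphic, so all estimates must be made off its indeterminacy set, which is harmless since that set has codimension at least two.)

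The genuine gap is your endgame. Local finiteness of $\int f^*\Omega_Y\wedge\omega^{n-k}$ does not bound the volume of the graph of $g=\psi\circ f$: that volume equals $\sum_{j}\binom{n}{j}\int_{\Delta^n\setminus B}\omega^{n-j}\wedge g^*\omega_{FS}^j$, and the measure-hyperbolicity estimate controls only the top mixed term $j=k$. It says nothing about the terms $1\le j<k$, in particular nothing about the $(1,1)$-mass $\int g^*\omega_{FS}\wedge\omega^{n-1}$, which is precisely what a Bishop-type extension across a divisor requires. Pointwise the loss is real: the $j$-th elementary symmetric function of the eigenvalues of $g^*\omega_{FS}$ relative to $\omega$ is not controlled by the $k$-th one, because on the locus where $dg$ drops rank the top term vanishes identically while the lower ones can be arbitrarily large; so finite volume is a priori compatible with infinite graph area, and Bishop's theorem cannot be invoked on the strength of your estimate alone. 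This is exactly why Kobayashi--Ochiai do not argue via graph volume. Their closing step is more elementary: writing $f^*\sigma_j=h_j\,(dz_1\wedge\cdots\wedge dz_n)^{\otimes m}$ (after the reduction to equal dimensions), the Ahlfors--Schwarz bound gives $|h_j|\le C\bigl(|z_1|\,\bigl|\log|z_1|\bigr|\bigr)^{-m}$, so $z_1^m h_j$ is bounded, extends holomorphically across $B$ by Riemann's removable singularity theorem, and vanishes along $B$ thanks to the $\bigl|\log|z_1|\bigr|^{-m}$ decay; hence each $h_j$ is meromorphic on $\Delta^n$, the map $\psi\circ f=[h_0:\cdots:h_N]$ extends meromorphically, and one recovers $f$ by composing with $\psi^{-1}$, using that $\psi$ is birational onto its image and that $f$ is nondegenerate. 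If you replace your Bishop step by this coefficient-decay argument, the rest of your outline stands.
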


Since a fiber of a  meromorphic map $V \dashrightarrow Y$
has only finitely many irreducible components, this
immediately implies the following.

\begin{corollary}  \label{KO}
Let $X$ be a quasi-projective variety and $f:X\map Y$
 nondegenerate meromorphic map
from $X$ to a variety of general type.
Let $F\subset X$ be an irreducible component of any fiber of $f$.

Let 
$\tilde X\to X$  be an \'etale Galois cover with group $\Gamma$.
 If $\tilde X$ is 
Zariski open in a compact complex manifold
then  $\im[\pi_1(F)\to \pi_1(X)\to\Gamma ]$ has finite index in $\Gamma$.  \qed
\end{corollary}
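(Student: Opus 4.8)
The plan is to lift $f$ to the cover, extend it across the boundary by the Kobayashi--Ochiai theorem (\ref{KO-orig}), and then read off the index from the finiteness of the fibers of a meromorphic map. First I would set $\tilde f:=f\circ\pi:\tilde X\map Y$. Since $\pi$ is \'etale, hence a local biholomorphism, the tangent map of $\tilde f$ is surjective at every point lying over a point where $f$ is nondegenerate, so $\tilde f$ is again a nondegenerate meromorphic map. Let $V$ be the compact complex manifold in which $\tilde X$ is Zariski open and put $B:=V\setminus\tilde X$, a proper closed analytic subset. Applying (\ref{KO-orig}) to $\tilde f$ on $V\setminus B=\tilde X$ extends $\tilde f$ to a meromorphic map $\overline{\tilde f}:V\map Y$.

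Next I would translate the conclusion into a statement about connected components of a preimage. Write $y:=f(F)$, so that $F$ is an irreducible (hence connected) component of the fiber $f^{-1}(y)$. As $\pi$ is a covering map, $\pi^{-1}(F)$ is the disjoint union of its connected components $G_1,G_2,\dots$, and since $\pi$ is Galois with group $\Gamma$, the group $\Gamma$ permutes these components transitively, the stabilizer of the component $G_0$ through a fixed lift of a base point $x_0\in F$ being exactly $\im[\pi_1(F)\to\pi_1(X)\to\Gamma]$. Hence the number of connected components of $\pi^{-1}(F)$ equals $[\Gamma:\im(\pi_1(F)\to\Gamma)]$, and it suffices to prove this number is finite.

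To bound it, pick in each $G_i$ an irreducible component $Z_i$ of $\pi^{-1}(F)$. Then $\dim Z_i=\dim F$ and $\pi(Z_i)=F$, so $Z_i$ is contained neither in $\pi^{-1}(F_j)$ for any other irreducible component $F_j$ of $f^{-1}(y)$ nor in $G_{i'}$ for $i'\neq i$; thus the $Z_i$ are pairwise distinct irreducible components of $\tilde f^{-1}(y)=\pi^{-1}(f^{-1}(y))$. But $\tilde f^{-1}(y)$ is the trace on the open set $\tilde X$ of the fiber $\overline{\tilde f}{}^{-1}(y)$, and a fiber of a meromorphic map to $Y$ has only finitely many irreducible components; intersecting with the open set $\tilde X$ cannot increase this count, so $\tilde f^{-1}(y)$ has finitely many irreducible components. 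Consequently there are only finitely many $Z_i$, hence only finitely many $G_i$, and $[\Gamma:\im(\pi_1(F)\to\Gamma)]<\infty$, as claimed.

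The step I expect to be the only delicate one is the bookkeeping in the last paragraph: one must be sure that the distinct connected components $G_i$ really contribute \emph{distinct} irreducible components $Z_i$ of the fiber, rather than being absorbed into a single component of $\overline{\tilde f}{}^{-1}(y)$. This is exactly what the pairwise disjointness of the $G_i$ inside the dense open set $\tilde X\subset V$ guarantees, together with $\pi(Z_i)=F$, which prevents $Z_i$ from lying in the preimage of another component of $f^{-1}(y)$. Everything else is formal, which is why (\ref{KO-orig}) makes the corollary immediate.
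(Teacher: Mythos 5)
Your proposal is correct and follows essentially the same route as the paper: the paper's proof of (\ref{KO}) is precisely the one-line observation that, after extending $f\circ\pi$ across the boundary via (\ref{KO-orig}), a fiber of the resulting meromorphic map $V\dashrightarrow Y$ has only finitely many irreducible components, so $\pi^{-1}(F)$ has finitely many connected components and hence $\im[\pi_1(F)\to\Gamma]$ has finite index in $\Gamma$. Your write-up just makes explicit the covering-space bookkeeping (components of $\pi^{-1}(F)$ versus cosets of the image subgroup) and the comparison of fibers that the paper leaves to the reader.
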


In a special case, the above conclusion can be strengthened
much further.

\begin{theorem} \label{lemmaueno} 
\cite[Thm.13]{Kaw81}
Let $A$ be an Abelian variety and $X \to A$ a finite morphism from a
normal projective variety to $A$.
If $X$ does not map onto a variety of general type, then $X$ is 
an Abelian variety.  Otherwise a finite \'etale cover of $X$ is a product of
a variety of general type and an Abelian variety. \qed
\end{theorem}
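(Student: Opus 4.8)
The plan is to recognize the dichotomy in the statement as the dichotomy $\kappa(X)=0$ versus $\kappa(X)>0$, and to treat the two cases with Ueno's fibration theorem for subvarieties of Abelian varieties together with Kawamata's characterization of Abelian varieties among varieties of Kodaira dimension zero. First I would pass to a smooth model $X'\to X$ for all Kodaira-dimension and Iitaka-fibration considerations, noting that Kodaira dimension and the property of dominating a variety of general type are birational invariants. By the universal property of the Albanese, the finite map $X\to A$ factors, up to translation, as $X\xrightarrow{a}\Alb(X)\xrightarrow{h}A$ with $h$ a homomorphism, and finiteness forces $a$ to be generically finite onto a subvariety of $\Alb(X)$ of dimension $\dim X$. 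I also record the endgame device: if I show that the normal variety $X$ is birational to an Abelian variety, then, since $X\to A$ is finite, the Albanese morphism $X\to\Alb(X)$ must itself be finite (no positive-dimensional subvariety of $X$ can be contracted), and a finite birational morphism from the normal $X$ onto the smooth $\Alb(X)$ is an isomorphism by Zariski's main theorem, so $X$ is Abelian.

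The key observation is that ``$X$ dominates a positive-dimensional variety of general type'' is equivalent to $\kappa(X)>0$: if $\kappa(X)>0$ the base of the Iitaka fibration is such a variety, while if $X$ dominated a positive-dimensional general type variety $W$ then additivity of the Kodaira dimension over a base of general type (the case of $C_{n,m}$ established by Kawamata) would give $\kappa(X)\ge\dim W>0$. This identifies the first clause with the case $\kappa(X)=0$. Here I apply Ueno's theorem to the image $Y\subset A$: there is an Abelian subvariety $B\subseteq A$ with $Y+B=Y$ and $Y/B\subset A/B$ of general type with $\dim(Y/B)=\kappa(Y)$. Since $X\to Y\to Y/B$ is dominant and $X$ dominates no positive-dimensional general type variety, $Y/B$ is a point, so $Y$ is a translate of $B$ and hence Abelian. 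Thus $X$ is finite over the Abelian variety $Y$ with $\kappa(X)=0$, and Kawamata's characterization shows the smooth model is birational to its Albanese; by the remark above, $X$ is itself Abelian.

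For the second clause, assume $\kappa(X)>0$ and let $f\colon X\to Z$ be the Iitaka fibration on a smooth model, so $Z$ is of general type of dimension $\kappa(X)$ and a general fiber $F$ has $\kappa(F)=0$. As $X\to A$ is finite, $F\to A$ is finite; since $\kappa(F)=0$, the first clause applies to $F$ and makes it an Abelian variety, so its image in $A$ is a coset of an Abelian subvariety. As $F$ varies over the connected base $Z$, these cosets are all translates of one fixed Abelian subvariety $B\subseteq A$ (the Abelian subvarieties of $A$ form a discrete set). Hence the image $Y\subset A$ satisfies $Y+B=Y$, and $Y/B\subset A/B$ is the general type quotient of Ueno's theorem, dominated by $Z$. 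To produce the product I would split $A$ up to isogeny by Poincar\'e reducibility, $A\sim B\times(A/B)$, pull the associated finite \'etale cover back to $X$, and use the resulting retraction onto the $B$-factor to trivialize the fibration in Abelian varieties $X\to(\text{general type base})$ after a further finite \'etale cover. This exhibits a finite \'etale cover of $X$ as a product of a variety of general type (a cover of $Y/B$) and an Abelian variety (isogenous to $B$).

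The step I expect to be the main obstacle is where Kawamata's work is genuinely essential, and it enters twice. The heart of the first clause is the nonformal fact that a smooth projective variety of Kodaira dimension zero with generically finite Albanese map is birational to its Albanese; this is Kawamata's characterization of Abelian varieties and is the deepest input. The second delicate point is the trivialization in the second clause: passing from the up-to-isogeny splitting $A\sim B\times(A/B)$ and the fiberwise Abelian structure to an honest product decomposition of a finite \'etale cover of $X$ requires controlling the finite map $X\to Y$ onto its image together with the monodromy of the Abelian fibration over $Z$, and arranging that a single finite \'etale cover simultaneously trivializes everything. All remaining ingredients---the reduction to a smooth model, the Albanese factorization, the equivalence of the two forms of the hypothesis via the Iitaka fibration, and the applications of Ueno's theorem and Poincar\'e reducibility---are formal bookkeeping.
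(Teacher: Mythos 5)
First, a point of comparison: the paper does not prove this statement at all --- it is quoted directly from Kawamata \cite[Thm.13]{Kaw81} with a \qed, so there is no internal proof to measure your proposal against. What you have written is an attempt to reprove Kawamata's theorem, and it does follow the broad architecture of Kawamata's actual argument (Ueno's fibration theorem for subvarieties of Abelian varieties, the $\kappa=0$ characterization of Abelian varieties, additivity over a base of general type, Poincar\'e reducibility). Judged on its own merits, however, it has a genuine gap, and the gap sits exactly where all the weight is.

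Your ``key observation'' asserts that $\kappa(X)>0$ implies $X$ dominates a positive-dimensional variety of general type, justified by the claim that the base of the Iitaka fibration is of general type. That claim is false for general projective varieties: an elliptic surface $S\to\p^1$ with $\kappa(S)=1$ has Iitaka base $\p^1$, and $S$ dominates no positive-dimensional variety of general type (a map to a general type variety kills the elliptic fibers by Riemann--Hurwitz, hence factors through $\p^1$). So the equivalence cannot be quoted as a generality; it must be \emph{proved} for the varieties at hand, namely finite covers of Abelian varieties, and proving it there is precisely the hard core of Kawamata's Theorem 13. Both halves of your argument lean on this unproved implication: in the first clause you need it to conclude $\kappa(X)=0$ after showing (correctly, via Ueno) that the image of $X$ in $A$ is Abelian; in the second clause you invoke it verbatim when you declare the Iitaka base $Z$ to be of general type of dimension $\kappa(X)$. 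Worse, you classify this step as ``formal bookkeeping,'' when it is in fact the crux: closing it requires a real argument --- e.g.\ an induction on dimension in which one shows the Iitaka fibers map onto translates of a fixed Abelian subvariety $B$, that the fibers are Abelian varieties admitting isogenies onto $B$ of bounded degree (hence the fibration is isotrivial), that after a finite \'etale base change the fibration becomes birationally a product, and then that \'etale invariance of $\kappa$ plus $\kappa(F\times Z)=\kappa(F)+\kappa(Z)$ forces a contradiction unless the base is of general type. This is essentially what Kawamata does, and none of it appears in your sketch. The secondary issue --- upgrading the Poincar\'e splitting of $A$ to an honest product decomposition of a finite \'etale cover of $X$ in the second clause --- you at least flag as delicate, but it too needs the isotriviality/finite-monodromy argument rather than the retraction you gesture at.
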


The splitting mentioned in the statement of 
Theorem \ref{theoremquasiprojective} is a straightforward consequence of
 deep results of Grauert.

\begin{theorem} \label{theoremisotrivial}
Let \holom{f}{X}{Y} be a locally trivial 
proper fibration between complex manifolds. If the universal cover of $Y$ is Stein and contractible, then the universal cover of $X$ splits as a product:
$$\upX\simeq \tilde{F}\times\upY.$$
\end{theorem}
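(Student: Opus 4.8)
The plan is to trivialize the fibration after pulling it back to $\upY$ by means of Grauert's Oka principle, and then to compare universal covers. First I would form the fiber product $\hat{X} := X \times_Y \upY$, with its two projections $p:\hat{X}\to X$ and $\hat{f}:\hat{X}\to\upY$. Since $p$ is the pullback along $f$ of the universal covering $\upY\to Y$, it is itself an (\'etale) covering map; consequently $X$ and any connected component of $\hat{X}$ have the same universal cover $\upX$. On the other hand $\hat{f}$ is the pullback of $f$, hence again a locally trivial, proper fibration with the same fiber $F$, but now over the contractible --- in particular simply connected --- base $\upY$.

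The crux is to show that $\hat{f}$ is holomorphically trivial. As $f$ is proper, the fiber $F$ is a compact complex manifold, so $\aut(F)$ carries the structure of a complex Lie group (Bochner--Montgomery). The bundle $\hat{f}$ is thus classified by a class in $H^1(\upY,\aut(F))$, where $\aut(F)$ now denotes the sheaf of holomorphic maps into this Lie group. Contractibility of $\upY$ forces the underlying topological bundle to be trivial, and since $\upY$ is Stein, Grauert's Oka principle identifies the holomorphic and topological classifications. Hence the class vanishes and $\hat{X}\simeq F\times\upY$ biholomorphically.

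It then remains to pass to universal covers. Because $\upY$ is simply connected, the universal cover of $F\times\upY$ is $\tilde{F}\times\upY$, where $\tilde{F}$ is the universal cover of $F$ (or of the relevant connected component, should $F$ fail to be connected). Combined with the first step this gives $\upX = \widetilde{\hat{X}} \simeq \tilde{F}\times\upY$, the desired splitting.

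I expect the middle step to be the main obstacle, since it is the only place where the hypotheses on $\upY$ are genuinely used. Two points must be handled with care: that $\aut(F)$ is really a complex Lie group, so that the Oka principle is applicable (this uses compactness of $F$, hence properness of $f$); and the precise form of Grauert's theorem, namely the bijection between holomorphic and continuous $\aut(F)$-valued $1$-cohomology over a Stein space, together with the vanishing of the topological class guaranteed by contractibility of $\upY$.
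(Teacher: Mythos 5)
Your proposal is correct and follows essentially the same route as the paper: form the fiber product $X\times_Y \upY$, invoke Grauert's Oka principle (the paper cites \cite[Satz 6]{Gra58}) to trivialize the resulting proper fiber bundle over the Stein, contractible base $\upY$, and then deduce the splitting of $\upX$ by passing to universal covers. Your extra care about $\aut(F)$ being a complex Lie group and about possibly disconnected fibers only makes explicit what the paper leaves implicit.
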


\begin{proof}
Since $f$ is locally trivial and proper, 
it is a fiber bundle with fiber $F$ and group $G=\mathrm{Aut}(F)$ (a complex Lie group). Consider the fiber product
$$\upX_f=X\times_Y \upY;$$
it is a connected cover of $X$ which is also a fiber bundle over $\upY$ (fiber $F$ and group $G$). We can now apply \cite[Satz 6]{Gra58}: this fiber bundle has to be trivial and this gives a splitting
$$\upX_f\simeq F\times\upY.$$
Since $\upX_f$ is an intermediate cover, $\upX$ has to split as well.
\end{proof}

If the base of a fiber bundle is not known to be contractible,
one can still prove global topological triviality of certain fiber bundles.

\begin{lemma} \label{bundles.lem}
Let $V$ be a complex manifold and 
$\pi: U\to V$  a complex analytic fiber bundle
with compact fiber $F$. Assume that the structure group $G$ is
compact  (hence its connected component $G^{\circ}$ is a complex torus).
Assume furthermore that there is a closed subspace
$W\subset U$ such that $\pi:W\to V$ is generically finite.

Then there is a finite \'etale cover $\sigma: V'\to V$ such that
the pull-back 
$\sigma^*U\to  V'$ is globally trivial as a $C^{\infty}$-fiber bundle.

If $G$ is finite then $\sigma^*U\to  V'$ is  complex analytically trivial. 
\end{lemma}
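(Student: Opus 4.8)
The plan is to classify $U$ by a topological characteristic class, to use the multisection $W$ to force that class to be torsion, and then to annihilate torsion classes by a finite \'etale cover. First I would dispose of the component group. Since $G$ is compact, $Q:=G/G^{\circ}$ is finite and $T:=G^{\circ}$ is a complex torus. The $Q$-bundle associated with $U$ is a flat bundle with finite fibre, i.e. a finite covering of $V$ whose monodromy $\rho:\pi_1(V)\to Q$ has finite image; let $\sigma_1:V_1\to V$ be the finite \'etale cover trivialising $\rho$. Over $V_1$ the structure group of $\sigma_1^{*}U$ reduces to $T$, and $W_1:=\sigma_1^{-1}(W)$ is still generically finite over $V_1$, of some degree $d$. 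This already settles the last assertion: if $G$ is finite then $T=\{e\}$, so $\sigma_1^{*}U$ has trivial holomorphic structure group and is analytically isomorphic to $V_1\times F$, with no need for $W$.

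Now $\sigma_1^{*}U\cong P\times_T F$ for a principal $T$-bundle $P\to V_1$, and I claim it suffices to prove that the topological class $c:=[P]\in H^2(V_1,\pi_1(T))$ is torsion. The point is that a compact $T$ acts on $F$ essentially by translations of the Albanese: the action on $\Alb(F)$ factors through the translation subgroup, giving a homomorphism $\phi:\aut^{\circ}(F)\to\Alb(F)$, and the subgroup of $\aut^{\circ}(F)$ acting trivially on $\Alb(F)$ is affine algebraic; a connected subgroup of $T$ that is simultaneously compact and affine is trivial, so $\phi|_T$ has finite kernel. Hence $\Alb(F)$ is a $T$-space on which $T$ acts by translations through an isogeny onto a subtorus, $U_{\Alb}:=P\times_T\Alb(F)$ is a genuine $\Alb(F)$-torsor bundle, and its class is $\iota_{*}c$, the image of $c$ under $H^2(V_1,\pi_1(T))\to H^2(V_1,\pi_1(\Alb(F)))$. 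Since $\phi|_T$ is an isogeny onto a subtorus, this coefficient map has torsion kernel, so $c$ is torsion as soon as $\iota_{*}c$ is; and once $c=0$ the bundle $P$, hence $U$, is $C^{\infty}$-trivial, because the (virtually free) translation action identifies triviality of $P$ with triviality of $U$.

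The multisection now enters. Pushing $W_1$ fibrewise through the Albanese map produces a closed subvariety $W_{\Alb}\subset U_{\Alb}$ that is again finite of some degree $d'\geq 1$ over $V_1$, i.e. a degree-$d'$ multisection of the $\Alb(F)$-torsor bundle $U_{\Alb}$. Summing the $d'$ sheets by means of the group law of $\Alb(F)$ defines a morphism from the relative symmetric product $\Sym^{d'}_{V_1}U_{\Alb}$ to the $d'$-fold sum torsor $U_{\Alb}^{+d'}$, whose class equals $d'\,\iota_{*}c$; the multisection $W_{\Alb}$ is a section of the symmetric product, so its image is a global section of $U_{\Alb}^{+d'}$. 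A torsor bundle with a section is trivial, whence $d'\,\iota_{*}c=0$ and $c$ is torsion.

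Finally I would kill the torsion class. Writing $\pi_1(T)\cong\Z^{2k}$, the class $c$ has finitely many components in $H^2(V_1,\Z)$, each of which is torsion and therefore the Bockstein image of a finite-order character in $H^1(V_1,\Q/\Z)=\mathrm{Hom}(\pi_1(V_1),\Q/\Z)$; passing to the finite \'etale cover $\sigma':V'\to V_1$ on which all these characters become trivial pulls $c$ back to $0$, so $\sigma'^{*}P$, hence $\sigma'^{*}\sigma_1^{*}U$, is $C^{\infty}$-trivial, and the composite $V'\to V_1\to V$ is the desired cover. The main obstacle is the passage in the third paragraph, converting a multisection of the fibre bundle $U$ into a numerical bound on the characteristic class of the underlying principal torus bundle: this rests on the structural fact that a compact $T=G^{\circ}$ acts on $F$ through translations of the Albanese (so that, after the Albanese push-forward, $U$ becomes an honest torsor bundle on which summation over sheets makes sense), together with the verification that the sheet-summation, a priori defined only over the locus where $W_1\to V_1$ is \'etale, extends to a global section by properness of $\Sym^{d'}_{V_1}U_{\Alb}\to V_1$ and normality of $V_1$.
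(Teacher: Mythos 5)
Your reduction to a connected torus structure group and your final Bockstein step both match the paper, and the norm/multisection idea is attractive; but the core of your argument --- pushing $W$ through the fibrewise Albanese map --- rests on a claim that fails in the generality of the lemma. The fiber $F$ is only assumed to be a compact complex manifold, and the facts you invoke about $\Alb(F)$ (that $\aut^{\circ}(F)$ acts on it by translations with \emph{linear} kernel, so that $\phi|_{T}$ has finite kernel) are theorems of Matsumura resp.\ Fujiki--Lieberman that require $F$ projective or K\"ahler. They are false for non-K\"ahler fibers: take $F$ the Hopf surface $(\C^2\setminus\{0\})/\langle z\mapsto 2z\rangle$. Then $F$ has no nonzero holomorphic $1$-forms, so $\Alb(F)$ is a point, while $\aut^{\circ}(F)$ contains the elliptic curve $T=\C^{*}/\langle 2\rangle$ (the image of the scalar matrices), which is a compact complex torus acting freely on $F$. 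For such a fiber your $\phi|_{T}$ is the zero map, $U_{\Alb}$ carries no information, and the proof collapses, although the lemma does apply to bundles with this fiber and this structure group. The paper's proof avoids the Albanese entirely: it pulls the bundle back to $W$ itself, where the tautological section --- together with the fact that an effective holomorphic action of a complex torus on a connected compact complex manifold has finite stabilizers (linearize at a fixed point of a subtorus; a holomorphic representation of a compact connected complex group is trivial) --- shows that the pulled-back class is torsion, and then uses that $\pi^{*}:H^{2}(V,\Q)\to H^{2}(W,\Q)$ is injective for the finite surjective map $\pi|_{W}$ (transfer) to conclude that $c_1(U/V)$ is torsion on $V$. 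That route needs nothing about $F$ beyond compactness.

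A second, more repairable, defect is the extension step. Properness of $\Sym^{d'}_{V_1}U_{\Alb}\to V_1$ and normality of $V_1$ do not by themselves extend a section defined off an analytic subset: a section of a proper fibration with torus fibers over a punctured disc can have an essential singularity (e.g.\ $z\mapsto [1/z]\in\C/\Lambda$ on $\Delta^{*}$), and over the locus where $W_{\Alb}\to V_1$ has positive-dimensional fibers the limit cycles need not be $0$-cycles, hence define no point of the symmetric product. What makes the extension work is the device the paper uses explicitly: assuming (as the hypothesis implicitly does) that every component of $W$ dominates $V$, Zariski's main theorem shows that the positive-dimensional-fiber locus of $W\to V$ lies over a subset of codimension $\geq 2$, which one may delete without changing $H^{2}$ --- equivalently, your summed section, which is well defined over the finite locus by the trace argument (this is where normality genuinely enters), extends across that codimension $\geq 2$ set by a Hartogs argument. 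You should replace ``properness and normality'' by this codimension-two argument; as stated, the justification is not valid.
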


\begin{proof}  We have a monodromy representation
$\pi_1(V)\to G/G^{\circ}$. By passing to the cover of $V$ corresponding to
its kernel, we may assume that  the structure group $G$ is a complex torus.
If $\dim G=0$ then we have a trivial bundle.

In general,  $G$ is diffeomorphic to $\bigl(S^1\bigr)^{2d}$ , thus  
$C^{\infty}$-fiber bundles
with  structure group $G$ are classified by
$$
c_1(U/V)\in H^1\bigl(V, G\bigr)=H^1\bigl(V, S^1\bigr)^{2d}=
H^2\bigl(V, \z\bigr)^{2d}.
$$
Let $Z\subset V$ denote the closed subspace over which $\pi:W\to V$
has positive dimensional fibers. Then $Z$ has complex codimension $\geq 2$,
thus $H^2\bigl(V, \z\bigr)=H^2\bigl(V\setminus Z, \z\bigr)$.
Therefore we can replace $V$ by $V\setminus Z$ and assume that
 $\pi:W\to V$ is  finite.

After base change to $W$, the fiber bundle has a section, thus its
Chern class is trivial. This is equivalent to 
$$
c_1(U/V)\in \ker\Bigl[H^2\bigl(V, \z\bigr)
\stackrel{\pi^*}{\to} H^2\bigl(W, \z\bigr)\bigr]^{2d}.
$$
With $\q$-coefficients, the map $\pi^*$ is an injection,
thus $c_1(U/V)$ is torsion in $H^2\bigl(V, \z\bigr)^{2d}$.
The torsion in $H^2\bigl(V, \z\bigr)$ comes from the torsion
in $H_1\bigl(V, \z\bigr)$, hence it is killed after a suitable
finite \'etale cover of $V$.
\end{proof}

Examples where the assumptions of (\ref{bundles.lem})
hold are given by the following.
(See \cite[\S 14]{ueno} for a more modern exposition.)

\begin{lemma} \cite{matsusaka, matsumura} \label{aut.gp.lem}
Let $X$ be a 
normal, projective variety that is not birationally ruled.
Let $L$ be an ample line bundle and let $\aut(X,L)$ denote the group
of those automorphisms $\tau:X\to X$ such that $\tau^*L$ is
numerically equivalent to $L$.
Then the identity component $\aut^{\circ}(X,L)\subset \aut(X,L)$
is an Abelian variety and the quotient $ \aut(X,L)/\aut^{\circ}(X,L)$
is finite. \qed
\end{lemma}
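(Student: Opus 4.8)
The plan is to separate the two assertions after the preliminary remark that $\aut^\circ(X,L)=\aut^\circ(X)$. By definition $\aut(X,L)$ is the stabiliser of the class $[L]$ for the action of $\aut(X)$ on the finitely generated discrete group $\NS{X}$. The induced homomorphism from the connected group $\aut^\circ(X)$ to the discrete group $\mathrm{Aut}(\NS{X})$ has connected, hence trivial, image, so $\aut^\circ(X)$ fixes every class of $\NS{X}$ and in particular lies in $\aut(X,L)$; being connected and containing the identity, it is precisely the identity component of $\aut(X,L)$. Thus $\aut(X,L)/\aut^\circ(X,L)=\aut(X,L)/\aut^\circ(X)$, and it remains to prove that this quotient is finite and that $\aut^\circ(X)$ is an abelian variety.

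For finiteness I would bound the degrees of the graphs. Write $n=\dim X$, fix the ample class $L\boxtimes L=p_1^*L+p_2^*L$ on $X\times X$, and for $\tau\in\aut(X,L)$ let $\Gamma_\tau\subset X\times X$ be the graph, an $n$-cycle. Since $p_1$ restricts to an isomorphism $\Gamma_\tau\to X$ carrying $p_1^*L$ to $L$ and $p_2^*L$ to $\tau^*L$, the hypothesis $\tau^*L\equiv L$ gives
\[
\bigl((p_1^*L+p_2^*L)^n\cdot\Gamma_\tau\bigr)=\sum_{a+b=n}\binom{n}{a}\bigl(L^a\cdot(\tau^*L)^b\bigr)_X=\sum_{a+b=n}\binom{n}{a}(L^n)_X=2^n\,(L^n)_X,
\]
a number independent of $\tau$ because intersection numbers depend only on numerical classes. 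Hence all these graphs are $n$-cycles of one fixed degree in the fixed polarised variety $X\times X$, so they all lie in the projective Chow variety $\Chow{X\times X}$ of cycles of that degree. The set of Chow points that are graphs of automorphisms is locally closed and is exactly $\aut(X,L)$, which is therefore of finite type and in particular has finitely many connected components.

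It remains to show that $\aut^\circ(X)$ is an abelian variety, and this is where the hypothesis that $X$ is not birationally ruled enters. The previous step already exhibits $\aut^\circ(X)$ as a finite-type connected algebraic group, so Chevalley's structure theorem puts it in an extension $1\to G_{\mathrm{aff}}\to\aut^\circ(X)\to B\to 1$ with $G_{\mathrm{aff}}$ connected linear and $B$ an abelian variety. If $G_{\mathrm{aff}}\neq 1$ it contains a copy of $\G_a$ or $\G_m$, which acts nontrivially on $X$ since $\aut(X)$ acts faithfully; over $\C$ its general orbit is then a rational curve ($\G_a\cong\C$ or $\G_m\cong\C^*$), and the rational quotient $X\dashrightarrow Q$ by this one-parameter action has $\dim Q=n-1$. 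As $\G_a$ and $\G_m$ are special groups, the generic torsor is Zariski-locally trivial, so $X$ is birational to $Q\times\G_a$ or $Q\times\G_m$, hence to $Q\times\PP^1$: that is, $X$ is birationally ruled, a contradiction. Therefore $G_{\mathrm{aff}}=1$ and $\aut^\circ(X)=B$ is an abelian variety.

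The hard part is this final step: turning the mere positivity of the dimension of the affine part of $\aut^\circ(X)$ into an honest birational ruling. The orbit-and-quotient argument does this cleanly in characteristic zero, where faithfulness of the action forces a nontrivial one-dimensional orbit and the speciality of $\G_a,\G_m$ trivialises the generic quotient; the points needing care are the passage from the rational quotient to a genuine birational product with $\PP^1$ (dealing with finite generic stabilisers for $\G_m$) and the appeal to finite-type representability of $\aut^\circ(X)$ underlying Chevalley's theorem. By comparison the finiteness of the component group is a soft boundedness statement and the identification of identity components is formal.
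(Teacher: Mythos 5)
The paper offers no proof of this lemma at all---it simply cites Matsusaka and Matsumura---and your argument correctly reconstructs exactly those two ingredients: Matsusaka's boundedness argument (graphs of polarized automorphisms have fixed degree, hence $\aut(X,L)$ is a finite-type, quasi-projective group with finitely many components) and Matsumura's theorem that a nontrivial linear part of $\aut^{\circ}(X)$, via a $\G_a$- or $\G_m$-subgroup and speciality of these groups, would make $X$ birationally ruled, so that Chevalley's theorem forces $\aut^{\circ}(X)$ to be an abelian variety. Your proposal is correct and follows essentially the same route the paper delegates to its references, so there is nothing to fix.
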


Recall that for a normal projective variety $X$, the Albanese morphism is defined as
the universal map from $X$ to abelian varieties.
The following result is a straightforward consequence of the analytic construction 
of the Albanese morphism for smooth projective varieties but still holds for normal ones. 

\begin{lemma}\label{alb=ab.lem} Let $X$ be a normal, projective variety
and $\alb:X\to \Alb(X)$ the Albanese morphism.
Then $\alb_*:H_1(X,\z)\to H_1(\Alb(X),\z)$
is surjective with finite kernel.
\end{lemma}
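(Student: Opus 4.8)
The plan is to reduce the statement to the single fact that $\alb_*$ is the projection of $H_1(X,\z)$ onto a full-rank lattice. Concretely, I would realize $\Alb(X)$ as a torus $V/\Lambda$ in which $\Lambda$ is the image of the period map $H_1(X,\z)\to V$; then $H_1(\Alb(X),\z)=\Lambda$ canonically, $\alb_*$ is by construction the period map, and everything comes down to showing that this map has image a full lattice and kernel exactly the torsion of $H_1(X,\z)$.

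For $X$ smooth this is the classical analytic construction with $V=H^0(X,\Omega^1_X)^*$ and period map
\[
H_1(X,\z)\longrightarrow H^0(X,\Omega^1_X)^*,\qquad \gamma\longmapsto\Bigl(\omega\mapsto\textstyle\int_\gamma\omega\Bigr).
\]
The one point to check is that the induced real map $H_1(X,\r)\to H^0(X,\Omega^1_X)^*$ is an isomorphism. It is injective: if a real class $\gamma$ annihilates every holomorphic form $\omega$, then by conjugation it annihilates every $\bar\omega$ as well, hence kills all of $H^1(X,\c)=H^{1,0}\oplus H^{0,1}$ and is zero. Since both sides have real dimension $b_1=2h^{1,0}$, it is an isomorphism. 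Thus $\Lambda$ is a full lattice, $\Alb(X)=H^0(X,\Omega^1_X)^*/\Lambda$ is a torus with $H_1(\Alb(X),\z)=\Lambda$, and the kernel of $\alb_*$ is $H_1(X,\z)\cap\ker\bigl(H_1(X,\r)\to H^0(X,\Omega^1_X)^*\bigr)$, which is the finite torsion subgroup. This settles the smooth case.

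For normal $X$ the same template applies once $H^1(X,\q)$ is known to carry a pure Hodge structure of weight $1$, so that there is a well-defined Hodge torus $V/\Lambda$ with $V=\bigl(F^1H^1(X,\c)\bigr)^*$ and $\Lambda=H_1(X,\z)/\mathrm{tors}$. I would obtain purity from a resolution $\mu:\upX\to X$: since $X$ is normal the fibers of $\mu$ are connected, the pushforward of the constant sheaf $\q$ is $\q_X$, and the Leray sequence gives an injection $\mu^*:H^1(X,\q)\hookrightarrow H^1(\upX,\q)$ of mixed Hodge structures. As the target is pure of weight $1$, so is the image, hence so is $H^1(X,\q)$. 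The dual weight-$(-1)$ structure on $H_1(X,\q)$ then defines the complex torus $\Alb(X)$, for which $H_1(\Alb(X),\z)=H_1(X,\z)/\mathrm{tors}$ tautologically and $\alb_*$ is the quotient projection; this is surjective with kernel the finite torsion subgroup, as claimed.

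The main obstacle is not the cohomological computation but the identification of this analytically constructed torus with the Albanese defined by the universal property in the statement. One must check that the canonical map $X\to V/\Lambda$ is universal among morphisms to abelian varieties, equivalently that any $X\to B$ induces a morphism of Hodge structures $H_1(X)\to H_1(B)$ factoring through $H_1(X)/\mathrm{tors}$. I should stress that one cannot shortcut the singular case by simply replacing $X$ with $\upX$: for normal $X$ the natural map $\Alb(\upX)\to\Alb(X)$ is only a surjection, generally with nontrivial kernel (for instance the projective cone over an Abelian variety is simply connected while its resolution is a $\p^1$-bundle over that Abelian variety, hence not), so the reduction genuinely requires the purity of $H^1(X)$ rather than a naive birational comparison.
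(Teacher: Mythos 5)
Your smooth case is complete, and your purity argument for normal $X$ (Leray for a resolution $\mu\colon\upX\to X$ together with strictness of morphisms of mixed Hodge structures) is correct, as is your warning that $\Alb(\upX)\to\Alb(X)$ can have positive-dimensional kernel. The genuine gap is precisely the step you yourself call ``the main obstacle'' and then leave unchecked. For singular $X$ there are no period integrals, so ``the canonical map $X\to V/\Lambda$'' does not exist until you construct it, and the claim that $H_1(\Alb(X),\z)=H_1(X,\z)/\mathrm{tors}$ holds ``tautologically'' conflates the Hodge torus you have built with the Albanese of the statement, which is defined by a universal property. Producing a morphism from $X$ to a torus realizing all of $H_1(X,\z)/\mathrm{tors}$, and verifying its universality, is not bookkeeping: it is the entire content of the lemma in the normal case. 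It is also exactly what the paper's proof supplies, by a different covering-space route: the cover of $X$ with group $H_1(X,\z)/\mathrm{tors}$ pulls back to the resolution, is induced there from a cover of the Albanese of the resolution, is trivial on the Albanese images of the $\mu$-fibers, hence descends from the quotient by the Abelian subvariety $B$ spanned by those images, and the map from the resolution to that quotient descends to $X$.

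To finish along your Hodge-theoretic route you would argue as follows. Since $H^1(X,\q)\subset H^1(\upX,\q)$ is a sub-Hodge structure, the kernel of the dual surjection $H_1(\upX,\q)\to H_1(X,\q)$ is a sub-Hodge structure as well, hence corresponds to an Abelian subvariety $K\subset\Alb(\upX)$; set $T:=\Alb(\upX)/K$, so that $H_1(T,\z)=H_1(X,\z)/\mathrm{tors}$ (here you use that $H_1(\upX,\z)\to H_1(X,\z)$ is onto because $\mu$ has connected fibers). Every $\mu$-fiber $F$ maps to a point of $X$, so $H_1(F,\q)\to H_1(T,\q)$ vanishes; as the image of a connected projective variety in a complex torus generates a subtorus whose $H_1(\cdot,\q)$ is spanned by the image of $H_1(F,\q)$, the composite $\upX\to\Alb(\upX)\to T$ contracts every fiber, and $\mu_*\O_{\upX}=\O_X$ then lets it descend to a morphism $X\to T$. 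Universality is then formal: any morphism $X\to B$ to an Abelian variety lifts to $\upX\to B$, hence factors (up to translation) through a homomorphism $\Alb(\upX)\to B$ that vanishes on $H_1(K,\q)$, hence kills $K$, hence factors through $T$. Only after these steps do you get $T=\Alb(X)$ and the conclusion that $\alb_*$ is the projection $H_1(X,\z)\to H_1(X,\z)/\mathrm{tors}$, surjective with finite kernel.
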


\begin{proof}

Take a resolution of singularities
$g:Y\to X$.  Set   $A_Y:=\Alb(Y)$.

Let $\tilde X\to X$ be the Galois cover
corresponding to $H_1(X,\z)/(\mbox{torsion})$. 
It induces a  Galois cover  $\tilde Y\to Y$ whose
Galois group is again $H_1(X,\z)/(\mbox{torsion})$.
Thus there is a  Galois cover  $\tilde A_Y\to A_Y$
such that $\tilde Y=\tilde A_Y\times_{A_Y}Y$.

Let $F_x\subset Y$ be any fiber of $g$.
By construction, $\tilde Y\to Y$ is trivial on $F_x$, hence
$\tilde A_Y\to A_Y$ is trivial on $\alb_Y(F_x)$. 

Let $B_Y\subset A_Y$ be the smallest Abelian subvariety
such that every $\alb_Y(F_x)$ is contained in a translate
of $B_Y$. Then $\tilde A_Y\to A_Y$ is trivial on $B_Y$,
hence $\tilde A_Y$ is a pull back of the corresponding  cover
$\tilde A_X\to  A_X:=A_Y/B_Y$.

By construction, every fiber of $g$ maps to a point in
$A_X$, thus $Y\to A_Y$ descends to a morphism
$X\to A_X$ and $\tilde X=\tilde A_X\times_{A_X}X$.
Thus $A_X=\Alb(X)$ and we are done.
\end{proof}

\begin{lemma}\label{qp.quot.lem} Let $f:X\to Z$ be a proper, equidimensional
morphism of normal complex spaces. Assume that  $X$ is quasi-projective.
Then $Z$ has a unique  quasi-projective structure such that $f$
is an algebraic morphism.
\end{lemma}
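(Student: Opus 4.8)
The plan is to parametrize the fibers of $f$ as compact cycles inside $X$ and to pull back the algebraic structure of the Chow variety. We may assume $f$ is surjective, as it is in all our applications, and we set $d := \dim X - \dim Z$; by equidimensionality every fiber $F_z := f^{-1}(z)$ is compact (properness of $f$) of pure dimension $d$. Fix an open embedding $X \subset \barX$ into a projective variety carrying an ample line bundle $\bar H$, and put $H := \bar H|_X$. Since each $F_z$ is compact, $H|_{F_z}$ is ample; in particular the $H$-degree of the fundamental cycle $[F_z]$ is well defined and, as $z$ varies, locally constant. Thus the fibers define an analytic family of compact $d$-cycles of $\barX$, all supported in $X$ and of bounded degree, and we obtain a holomorphic classifying map
\[
\psi : Z \longrightarrow \Chow{\barX}, \qquad z \longmapsto [F_z],
\]
landing in the Zariski open (hence quasi-projective) subset $C \subset \Chow{\barX}$ of cycles of the relevant degrees whose support avoids $\barX \setminus X$.

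I would then record three properties of $\psi$. First, it is holomorphic: this is where equidimensionality is used, since it guarantees that the fibers have no dimensional jumps and so form a genuine analytic family of cycles over the reduced base $Z$. Second, it is injective, because for $z_1 \ne z_2$ the fibers $F_{z_1}$ and $F_{z_2}$ are disjoint and hence have distinct supports. Third, it is proper: given a compact $K \subset C$, the supports of the cycles it parametrizes lie in a fixed compact subset of $X$, and a sequential argument using the continuity of $f$ and of the family of cycles shows $\psi^{-1}(K)$ is compact. By Remmert's proper mapping theorem the image $T := \psi(Z)$ is a closed analytic subset of $C$, and $\psi : Z \to T$ is finite and bijective; since $Z$ is normal, this realizes $Z$ as the normalization of $T$.

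The heart of the argument is to show that $T$ is \emph{algebraic} rather than merely analytic. Granting this, $T^\nu$ is a normal quasi-projective variety and $\psi$ identifies $Z \cong T^\nu$, which supplies the quasi-projective structure. That $f$ is then algebraic follows by comparison with the universal family $\mathcal U \to C$: the reduction of the fiber product $\mathcal U \times_C Z$ maps properly and bijectively to $X$ by $(z,x) \mapsto x$ (each point of $X$ lies on a unique fiber), hence isomorphically by the analytic form of Zariski's main theorem and the normality of $X$; under this isomorphism $f$ becomes the pullback of the algebraic projection $\mathcal U \to C$. Uniqueness is then formal: any quasi-projective structure on $Z$ for which $f$ is algebraic turns the same cycle construction into a \emph{morphism} $\psi'$ of varieties agreeing with $\psi$ analytically, so $\psi'$ again exhibits $Z$ as the normalization of $T$, and the two structures coincide with $T^\nu$.

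The main obstacle is precisely the passage from analytic to algebraic for the image $T$ (equivalently, algebraicity of the classifying map). I would attack it by combining the properness of $\psi$ with the algebraicity of the Chow variety, via a Chow/GAGA argument on the projective compactification: the cycle space of $\barX$ in a fixed degree is projective, and one must show that $T$, being swept out by a proper family of cycles of bounded degree, is a constructible algebraic subset of $C$. Making this precise — ruling out an essential-singularity or accumulation behaviour of the cycles $[F_z]$ toward $\barX \setminus X$ — is the delicate point, and it is here that the quasi-projectivity of $X$, through the ample bundle $H$ and the resulting degree bound, does the essential work.
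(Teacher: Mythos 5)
Your construction is the same as the paper's: the paper's one\-/line proof also sends $z$ to the cycle $[F_z]$ and views this as a holomorphic map from $Z$ to the cycle space of $X$ (your open set $C\subset\Chow{\barX}$ of cycles of the relevant dimension and degree avoiding $\barX\setminus X$ is exactly what the paper calls $\Chow{X}$). The properties you verify are correct: $\psi$ is holomorphic, injective and proper, so $T=\psi(Z)$ is a closed analytic subset of $C$ and $Z$ is the normalization of $T$. But the proof is incomplete at precisely the point you flag yourself: you never prove that $T$ is algebraic, and nothing you have established implies it. A closed analytic subset of a quasi-projective variety need not be algebraic, and Chow's theorem/GAGA cannot be invoked, because the closure of $T$ in the projective variety $\Chow{\barX}$ need not be an analytic set --- this is exactly the essential-singularity behaviour at the boundary that you mention and do not rule out. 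As written, your argument shows only that $Z$ is the normalization of a closed analytic subspace of a quasi-projective variety, which is not the statement of the lemma.

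The missing idea, and the content of the paper's assertion that $\psi$ maps $Z$ biholomorphically onto a \emph{connected component} of $\Chow{X}$, is that the image is not merely closed but (at least near general points) also \emph{open}; once $T$ is a union of irreducible components of $C$, algebraicity is free, since the analytic irreducible components of the quasi-projective variety $C$ (note that $\psi$ lands in a single degree, so $C$ has finitely many components) are its algebraic components, and no analytic-to-algebraic passage is needed. Openness is where equidimensionality and normality really work: fix $z_0\in Z$ and a Stein neighbourhood $V\ni z_0$ in $Z$; by properness of the universal family over $C$, every cycle $W$ sufficiently close to $\psi(z_0)$ has support in $f^{-1}(V)$, so each connected component of $\supp W$ maps under $f$ to a compact analytic subset of the Stein space $V$, i.e.\ to a point; hence nearby cycles are supported in fibers. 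Then, for $z_0$ general (so that the fiber cycle is irreducible and reduced --- here one uses connectedness of the fibers, available after Stein factorization and automatic in all the paper's applications, and normality of $X$), the constancy of the degree of fiber cycles forces such a fiber-supported cycle to be the full cycle $[F_z]$ of a single nearby point $z$: an irreducible multiplicity-one cycle cannot be approximated by sums of two or more fiber-supported cycles, and the degree bound excludes proper subcycles and non-reduced fibers. This openness statement converts your closed analytic image into an honest irreducible component of $C$; with it, the remainder of your argument (normalization, algebraicity of $f$ via the universal family, and the formal uniqueness) goes through.
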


\begin{proof} The map $f$ determines a natural morphism $Z\to \Chow{X}$
which maps $Z$ biholomorphically onto a connected component of
$\Chow{X}$. We  thus need to identify $Z$ with its image.
\end{proof}


\end{document}